\newcommand{\R}{\mathbb{R}}
\newcommand{\var}{\mathbb{\varepsilon}}
\renewcommand{\rho}{\varepsilonrho}
\renewcommand{\phi}{\varepsilonphi}
\newtheorem{thm}{Theorem}[section]
\title{Fast reaction limit for a Leslie-Gower model including preys, meso-predators and top-predators}
\author{  
L. Desvillettes$^1$\thanks{Corresponding author.}
\\ 
\texttt{desvillettes@imj-prg.fr} \\
\And
L. Fiorentino$^2$\thanks{Corresponding author.}\href{https://orcid.org/0000-0002-0154-6035}{\includegraphics[scale=0.06]{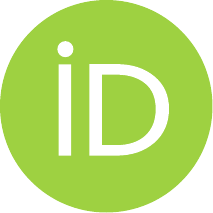}} \\ \texttt{ludovica.fiorentino@unina.it} \\ 
\And
T. Mautone$^3$ \\
\texttt{t.mautone@studenti.unina.it}  
\\ \\ $^1$Université Paris Diderot, Sorbonne Paris Cité \\ Institut de Mathématiques de Jussieu-Paris Rive Gauche, UMR 7586, CNRS, Sorbonne Universités, UPMC Univ. Paris 06\\ F-75013 \\ Paris \\ France \\
\\ \\ $^2$Dipartimento di Matematica e Applicazioni 'R.Caccioppoli' \\ Università degli Studi di Napoli Federico II \\ Via Cintia, Monte S.Angelo, 80126 Napoli \\ Italy \\
\\ \\ $^3$Gran Sasso Science Institute (GSSI)\\ Viale F. Crispi, 7, 67100 L'Aquila \\ Italy \\
}
\begin{document}
\maketitle
\begin{abstract}
	We consider a system of three reaction-diffusion equations modeling the interaction between a prey species and two predators species including functional responses of Holly type-II and Leslie-Gower type. We propose a reaction-diffusion model with five equations with simpler functional responses which, in the fast reaction limit, allows to recover the zero-th order terms of the initially considered system. The diffusive part of the initial equations is however modified and cross diffusion terms pop up. 
    We first study the equilibria of this new system and show that no Turing instability appears. We then rigorously prove a partial result of convergence for the fast reaction limit (in 1D and 2D)
\end{abstract}

\keywords{Cross-diffusion equations \and Predator-prey models \and Turing instability \and Instability analysis \and Fast reaction limit}

\section{Introduction}\label{intro}

\subsection{Presentation}
In predator-prey models, the use of complex functional responses is particularly efficient 
\cite{Holling1966, Ivlev1961, Beddington1975, DeAngelis1975, AG2000}. For example, the Holling-type II functional response \cite{Holling1966} models predators' behavior, namely describes the situation in which the predators will catch a limited number of available preys when they're abundant. On the other hand, the Beddington-DeAngelis functional response \cite{Beddington1975, DeAngelis1975} takes into account the competition between predators during the hunting. 

In the past, several works were written in order to obtain a formalization of the Holling-type II and Beddington-DeAngelis functional responses out of simple and realistic “microscopic models”. Metz and Diekmann \cite{MeDic1986}, and Geritz and Gyllenberg \cite{GeGyll2012} designed this kind of models for the Holling-type II functional response, Huisman and De Boer \cite{HuiBoe1997}, for the Beddington-DeAngelis one, and references therein. Metz and Diekmann proposed a system of three ODEs, in which the predators are divided in two classes, searching and handling, respectively. In the limit when the switch between handling and searching predators is infinitely fast (fast reaction limit), the original 
(Holling type-II) functional response is recovered. It is interesting to notice that starting with standard diffusion terms in the microscopic model 
and imposing that the diffusion is slower for handling predators than for searching predators, one ends up in the macroscopic model
with cross diffusion terms  (cf. \cite{LD_soresina} and \cite{CDesS2018}). 
As a consequence, the study of stability (appearance 
or not of Turing instability) is different from the study of stability of a macroscopic model in which standard diffusion are imposed.

In the present paper, we start from a (macroscopic) model of Leslie-Gower type of three equations (one equation for preys, 
one equation for meso-predators and one equation for top-predators) introduced in \cite{CdLFLM2023}, and propose for this type of functional response 
(\cite{Les1, Les2, Les3}, \cite{CCX}, \cite{CC}, \cite{Yu2014} and references therein)
a microscopic model of five equations. 
\nocite{Les1, Les2, Les3}

We present here the main features of this model, starting with the $0$-th order terms (in terms of derivatives). The preys 
follow a logistic equation in which a prey-predator term (the predator in this term is the meso-predator) is added. This term includes the effect of the presence of top predators on the predation rate of meso-predators. 
Like in \cite{MeDic1986}, we have divided the meso and top-predator populations into two classes, searching and handling, respectively. Searching meso-predators become handling ones at rate $\tilde{\alpha}$ proportional to the predation rate of preys and come back to the searching class with a constant rate $\tilde{\gamma}$, analogously searching top-predators become handling ones at rate $\tilde{\beta}$, which is proportional to meso-predator population and come back to the searching class with a constant rate $\tilde{\eta}$. It is assumed that only handling meso-predators can contribute to the reproduction, in fact they give birth to a searching predator at constant rate $\Gamma$, while the meso-predators mortality rate $\tilde{\mu}$ (for deaths not due to the predation of the top-predators) is constant and equal for the two classes. Finally, the top predators follow a logistic equation where the carrying capacity is proportional to the number of meso-predators (Leslie-Gower functional response).
\par
Furthermore, it is assumed that the diffusion coefficient $d_2^{(1)}$ and $d_3^{(1)}$ of searching meso and top-predators are, respectively, greater then the diffusion coefficient $d_2^{(2)}$, $d_3^{(2)}$ of handling ones. As regard the searching-handling switch of meso-predators $\dfrac{1}{\delta}$, and the searching-handling switch of top-predators $\dfrac{1}{\varepsilon}$, it assumed that they are on a much faster time-scale than the reproduction and mortality processes. Under these assumptions, 
our system writes

\begin{equation}
\label{lim_sist}
        \begin{cases}
            \dfrac{\partial P^{\varepsilon, \delta}}{\partial \Tilde{t}} = \Tilde{r} \left(1 - \dfrac{P^{\varepsilon, \delta}}{K} \right) P^{\varepsilon, \delta} - \dfrac{\Tilde{\alpha} P^{\varepsilon, \delta} M_s^{\varepsilon, \delta}}{1 + \Tilde{c}\, \left(T_s^{\varepsilon, \delta}+T_h^{\varepsilon, \delta}\right)} + d_1 \Delta P^{\varepsilon, \delta}, \\[10pt]
            \dfrac{\partial M_s^{\varepsilon, \delta}}{\partial \Tilde{t}} = \dfrac{1}{\delta} \left[  - \dfrac{\Tilde{\alpha} P^{\varepsilon, \delta} M_s^{\varepsilon, \delta}}{1 + \Tilde{c}\, \left(T_s^{\varepsilon, \delta}+T_h^{\varepsilon, \delta}\right)} + \Tilde{\gamma} M_h^{\varepsilon, \delta} \right] + \Gamma M_h^{\varepsilon, \delta} - \Tilde{\mu} M_s^{\varepsilon, \delta} - \Tilde{\beta} M_s^{\varepsilon, \delta} \, T_s^{\varepsilon, \delta} + d_2^{(1)} \Delta M_s^{\varepsilon, \delta}, \\[10pt]
            \dfrac{\partial M_h^{\varepsilon, \delta}}{\partial \Tilde{t}} = \dfrac{1}{\delta} \left[  \dfrac{\Tilde{\alpha} P^{\varepsilon, \delta} M_s^{\varepsilon, \delta}}{1 + \Tilde{c}\, \left(T_s^{\varepsilon, \delta}+T_h^{\varepsilon, \delta}\right)} - \Tilde{\gamma} M_h^{\varepsilon, \delta} \right] - \Tilde{\mu} M_h^{\varepsilon, \delta} - \Tilde{\beta} M_h^{\varepsilon, \delta} \, T_s^{\varepsilon, \delta} + d_2^{(2)} \Delta M_h^{\varepsilon, \delta}, \\[10pt]
            \dfrac{\partial T_s^{\varepsilon, \delta}}{\partial \Tilde{t}} = \dfrac{1}{\varepsilon} \left[ - \Tilde{\beta} (M_s^{\varepsilon, \delta} + M_h^{\varepsilon, \delta}) T_s^{\varepsilon, \delta} + \Tilde{\eta} T_h^{\varepsilon, \delta} \right] + \Tilde{s} \,T_s^{\varepsilon, \delta}\,  \left(1 - \dfrac{T_h ^{\varepsilon, \delta} + T_s^{\varepsilon, \delta} }{\Tilde{m}(M_s^{\varepsilon, \delta} + M_h^{\varepsilon, \delta})} \right) + d_3^{(1)} \Delta T_s^{\varepsilon, \delta}, \\[10pt]
            \dfrac{\partial T_h^{\varepsilon, \delta}}{\partial \Tilde{t}} = \dfrac{1}{\varepsilon} \left[  \Tilde{\beta} (M_s^{\varepsilon, \delta} + M_h^{\varepsilon, \delta}) T_s^{\varepsilon, \delta} - \Tilde{\eta} T_h^{\varepsilon, \delta} \right] + \Tilde{s} \,T_h^{\varepsilon, \delta}\,  \left(1 - \dfrac{T_h ^{\varepsilon, \delta} + T_s^{\varepsilon, \delta} }{\Tilde{m}(M_s^{\varepsilon, \delta} + M_h^{\varepsilon, \delta})} \right) + d_3^{(2)} \Delta T_h^{\varepsilon, \delta}, \\
        \end{cases}
    \end{equation}
where $P^{\varepsilon, \delta}:= P^{\varepsilon, \delta}(\Tilde{t}, \Tilde{x})$ is the density of preys; while $M_s^{\varepsilon, \delta} := M_s^{\varepsilon, \delta}(\Tilde{t}, \Tilde{x})$, $T_s^{\varepsilon, \delta} := T_s^{\varepsilon, \delta}(\Tilde{t}, \Tilde{x})$, $M_h^{\varepsilon, \delta} := M_h^{\varepsilon, \delta}(\Tilde{t}, \Tilde{x})$ and $T_h^{\varepsilon, \delta} := T_h^{\varepsilon, \delta}(\Tilde{t}, \Tilde{x})$ are the respective densities of searching and handling meso and top-predators 
at time $\Tilde{t} \ge 0$ and point $\Tilde{x} \in \Omega \subset \R^N$. To system \eqref{lim_sist},
 we append the homogeneous Neumann (no-flux) boundary conditions

\begin{equation}
\label{BC}
    \nabla_x P^{\varepsilon, \delta}\cdot\textbf{n}=0, \quad \nabla_x M_s^{\varepsilon, \delta}\cdot\textbf{n}=0, \quad  \nabla_x M_h^{\varepsilon, \delta}\cdot\textbf{n}=0\quad \nabla_x T_s^{\varepsilon, \delta} \cdot \textbf{n}=0, \quad \nabla_x T_h^{\varepsilon, \delta} \cdot \textbf{n}=0, \quad \text{on} \ \partial\Omega\times\R^+, 
\end{equation}
and the ($\varepsilon, \delta$-independent) initial data
\begin{equation}
\label{soinit}
\begin{aligned}
    P^{\varepsilon, \delta}(0, \Tilde{x}) =& P_{in}( \Tilde{x}), \quad  M_s^{\varepsilon, \delta}(0, \Tilde{x}) = M_{s,in}( \Tilde{x}), \quad   M_h^{\varepsilon, \delta}(0, \Tilde{x}) = M_{h,in}( \Tilde{x}), \quad \\
    &T_s^{\varepsilon, \delta}(0, \Tilde{x}) =  T_{s,in}( \Tilde{x}), \quad   T_h^{\varepsilon, \delta}(0, \Tilde{x}) = T_{h,in}( \Tilde{x}) ,  \quad \text{on} \ \Omega .
\end{aligned}
\end{equation}
It is assumed that all the biological parameters of the system \eqref{lim_sist} are positive, and that all initial data are nonnegative.
\medskip

 The formal limit when $\var \to 0$ (with $\delta>0$ fixed) of 
this sytem writes 
\begin{equation}
\label{lim_sist_del}
        \begin{cases}
            \dfrac{\partial P^{\delta}}{\partial \Tilde{t}} = \Tilde{r} \left(1 - \dfrac{P^{\delta}}{K} \right) P^{ \delta} - \dfrac{\Tilde{\alpha} \Tilde{\gamma} P^{ \delta} (M_s^{ \delta} +M_h^{\delta}) }{\Tilde{\gamma} +\Tilde{\alpha} P^{\delta} + \Tilde{c}\, \Tilde{\gamma} T^{\delta}} + d_1 \Delta P^{\delta}, \\[10pt]
            \dfrac{\partial M_s^{ \delta}}{\partial \Tilde{t}} = \dfrac{1}{\delta} \left[  - \dfrac{\Tilde{\alpha} P^{\delta} M_s^{ \delta}}{1 + \Tilde{c}\, T^{ \delta}} + \Tilde{\gamma} M_h^{ \delta} \right] + \Gamma M_h^{ \delta} - \Tilde{\mu} M_s^{\delta} - \dfrac{ \Tilde{\beta} \Tilde{\eta} M_s^{\delta} \, T^{\delta} }{\tilde{\eta} + \tilde{\beta}  (M_s^{ \delta} +M_h^{\delta})    } + d_2^{(1)} \Delta M_s^{\delta}, \\[10pt]
            \dfrac{\partial M_h^{ \delta}}{\partial \Tilde{t}} = \dfrac{1}{\delta} \left[  \dfrac{\Tilde{\alpha} P^{\delta} M_s^{ \delta}}{1 + \Tilde{c}\, T^{ \delta}} - \Tilde{\gamma} M_h^{ \delta} \right] - \Tilde{\mu} M_h^{ \delta}  -  \dfrac{ \Tilde{\beta} \Tilde{\eta} M_h^{\delta} \, T^{\delta} }{\tilde{\eta} + \tilde{\beta}  (M_s^{ \delta} +M_h^{\delta})    } + d_2^{(2)} \Delta M_h^{ \delta}, \\[10pt]
            \dfrac{\partial T^{ \delta}}{\partial \Tilde{t}} =  \Tilde{s} \,T^{ \delta}\,  \left(1 - \dfrac{T ^{\delta} }{\Tilde{m}(M_s^{\delta} + M_h^{\delta})} \right) + \Delta \left( \dfrac{  d_3^{(1)}\tilde{\eta} + d_3^{(2)} \tilde{\beta}  (M_s^{ \delta} +M_h^{\delta})     }{ \tilde{\eta} + \tilde{\beta}  (M_s^{ \delta} +M_h^{\delta})   }\,  T^{\delta} \right), \\
        \end{cases}
    \end{equation}
with homogeneous Neumann (no-flux) boundary conditions

\begin{equation}
\label{BCdel}
    \nabla_x P^{ \delta}\cdot\textbf{n}=0, \quad \nabla_x M_s^{ \delta}\cdot\textbf{n}=0, \quad  \nabla_x M_h^{ \delta}\cdot\textbf{n}=0\quad \nabla_x T^{ \delta} \cdot \textbf{n}=0, \quad \text{on} \ \partial\Omega\times\R^+, 
\end{equation}
and the initial data
\begin{equation}
\label{soinitdel}
    P^{\delta}(0, \Tilde{x}) = P_{in}(\Tilde{x}), \quad  M_s^{\delta}(0,\Tilde{x}) = M_{s,in}(\Tilde{x}), \quad   M_h^{\delta}(0,\Tilde{x}) = M_{h,in}(\Tilde{x})\quad  T^{ \delta}(0,\Tilde{x}) =  T_{s,in}(\Tilde{x}) + T_{h,in}(\Tilde{x}) ,  \quad \text{on} \ \Omega .
\end{equation}

Starting then from system \eqref{lim_sist_del} -- \eqref{soinitdel} and letting $\delta \to 0$, we end up at the formal level with the system
%
\begin{equation}\label{cross_sist}
        \begin{cases}
            \dfrac{\partial P}{\partial \Tilde{t}} = \Tilde{r} \left(1 - \dfrac{P}{K} \right) P - \dfrac{\Tilde{\alpha} \Tilde{\gamma} PM}{\Tilde{\gamma} + \Tilde{\alpha} P + \Tilde{c} \Tilde{\gamma} T} + d_1 \Delta P, \\[10pt]
            \dfrac{\partial M}{\partial \Tilde{t}} = \dfrac{\Gamma \Tilde{\alpha} PM}{\Tilde{\gamma} + \Tilde{\alpha} P + \Tilde{c} \Tilde{\gamma} T} - \Tilde{\mu} M - { \dfrac{\Tilde{\eta} \Tilde{\beta} MT}{\Tilde{\eta} + \Tilde{\beta}M}}  + \Delta \left( \dfrac{d_2^{(1)} \Tilde{\gamma} (1+ \Tilde{c} T) + d_2^{(2)} \Tilde{\alpha}P}{\Tilde{\gamma} + \Tilde{\alpha} P + \Tilde{c} \Tilde{\gamma} T} M \right),  \\[10pt] 
            \dfrac{\partial T}{\partial \Tilde{t}} = \Tilde{s} \left(1 - \dfrac{T}{\Tilde{m} M} \right) T  + {\Delta \left( \dfrac{d_3^{(1)} \Tilde{\eta} + d_3^{(2)} \Tilde{\beta}M}{\Tilde{\eta} + \Tilde{\beta} M} T \right)},
        \end{cases}
    \end{equation}
with homogeneous Neumann (no-flux) boundary conditions

\begin{equation}
    \label{BC_lim}
     \nabla_x P\cdot\textbf{n}=0, \quad \nabla_x M\cdot\textbf{n}=0,
     \quad \nabla_x T \cdot \textbf{n}=0,  \quad \text{on} \ \partial\Omega\times\R^+,
\end{equation}
  and the initial data
\begin{equation}
\label{soinitdel2}
    P(0,\Tilde{x}) = P_{in}(\Tilde{x}), \quad  M(0,\Tilde{x}) = M_{s,in}(\Tilde{x}) + M_{h,in}(\Tilde{x}), \quad  T(0,\Tilde{x}) =  T_{s,in}(\Tilde{x}) + T_{h,in}(\Tilde{x}) ,  \quad \text{on} \ \Omega .
\end{equation}

 System (\ref{cross_sist}) -- (\ref{soinitdel2}) is very close to system $(1)$ of \cite{CdLFLM2023}, with the change of notations
$$ \Tilde{b} := \Tilde{\alpha}/\Tilde{\gamma}, \qquad \tilde{a} := \Tilde{\alpha}, \qquad   \tilde{d} := \Tilde{\mu}, $$
$$ \Tilde{e} := \Tilde{\beta}, \qquad \tilde{n} :=  \Tilde{\beta}/\Tilde{\eta},  \qquad   \tilde{\Gamma} := \Tilde{q}\, \Tilde{\gamma}. $$
 The only difference between (\ref{cross_sist}) -- (\ref{soinitdel2}) and $(1)$ of \cite{CdLFLM2023} lies in the much more 
complex shape of the diffusion terms for meso and top-predators in (\ref{cross_sist}) -- (\ref{soinitdel2}). Those are indeed 
cross diffusion terms, meaning that the diffusion rate for a given unknown can depend on the value of some other unknown.
 It can be noticed that the cross-diffusion terms of \eqref{cross_sist} are a convex combination of the diffusion coefficients $d_2^{(1)}$, $d_2^{(2)}$ for the meso-predator, $d_3^{(1)}$ and $d_3^{(2)}$ for the top-predator.
\medskip

Our first result concerns the stability analysis of system (\ref{cross_sist}) -- (\ref{soinitdel2}). As will be seen in Section \ref{tur_inst}, 
no Turing instability can appear for this system. This result is to be compared with the results of \cite{CdLFLM2023}, in which it is shown
that no Turing instability exists for a system similar to (\ref{cross_sist}) -- (\ref{soinitdel2}), but where diffusion terms have constant rates, 
and that Turing instability can appear when cross diffusion terms (with a significantly different structure from those of  (\ref{cross_sist}))
are considered. Our second result is a rigorous proof of convergence for suitable solutions of system  (\ref{lim_sist}) -- (\ref{soinit}) towards
solutions of system (\ref{lim_sist_del}) -- (\ref{soinitdel}) when $\varepsilon \to 0$ (and $\delta>0$ is fixed). A precise Theorem (Thm \ref{thtrun}) stating this convergence is written down (and proven) in Section \ref{rig_res}.
\medskip

We now briefly comment our results. First, up to our knowledge, it is the first time that a microscopic model leading to a cross diffusion system with three equations and two cross diffusion terms, in the macroscopic limit, is written down and analyzed. From the point of view of 
rigorous results, we point out that the passage to the limit when $\delta$ tends to $0$ in system (\ref{lim_sist_del}) -- (\ref{soinitdel})  (for large global solutions)
looks very difficult, since the theory of existence of large global solutions for systems like (\ref{cross_sist}) -- (\ref{soinitdel2})
(sometimes called ``non-triangular'' cross diffusion systems) has not yet been
developed. 

\section{Turing instability analysis}\label{tur_inst}

In this Section, we study the Turing instability regions associated to systems \eqref{cross_sist}-\eqref{BC_lim}. In order to do so, we first perform an adimensionalization, in order to keep a smaller number of parameters into the equations. Then, we investigate the possible linear instability of the coexistence (homogeneous) equilibrium $E^*$ (when it exists). Thus, we find the conditions guaranteeing the stability in absence of diffusion and those that guarantee the occurrence (or non-occurence) of Turing instability.

\subsection{Adimensionalization}
In order to simplify the notations and to keep only meaningful parameters, let us introduce the following dimensionless quantities
\begin{equation*}\label{adim}
        \begin{aligned}
            & P =: Ku, \quad M =: \frac{K\Tilde{r}}{\Tilde{\gamma}}v, \quad T =: \frac{\Tilde{m} \Tilde{r} K}{\Tilde{\gamma}}w, \quad \Tilde{t} =: \frac{t}{\Tilde{r}}, \quad \Tilde{\textbf{x}} =: L \textbf{x}, \\
            &b =: \frac{\Tilde{\gamma}}{\Tilde{\alpha}K},  \quad n =: \dfrac{\Tilde{\eta} \Tilde{\gamma}}{\Tilde{\beta} \Tilde{r} K}, \quad q =: \frac{\Gamma}{\Tilde{r}}, \quad s =: \frac{\Tilde{s}}{\Tilde{r}}, \quad d =: \frac{\Tilde{\mu}}{\Gamma}, \quad e =: \frac{\Tilde{m} \Tilde{\eta}}{\Gamma}, \quad c =: \dfrac{\Tilde{c} \Tilde{m} \Tilde{r}}{\Tilde{\alpha}}, \\
            &D_1 =: \frac{d_1}{\Tilde{r}L^2}, \quad D_2^{(1)} =: \frac{d_2^{(1)}}{\Tilde{r}L^2}, \quad D_2^{(2)} =: \frac{d_2^{(2)}}{\Tilde{r}L^2}, \quad D_3^{(1)} =: \frac{d_3^{(1)}}{\Tilde{r}L^2}, \quad D_3^{(2)} =: \frac{d_3^{(2)}}{\Tilde{r}L^2},
        \end{aligned}
    \end{equation*} 
where $L$ is the diameter of $\Omega$. 
System (\ref{cross_sist})
becomes
\begin{equation}
    \label{lim_sist_nodim}
        \begin{cases}
            \dfrac{\partial u}{\partial t} - D_1 \Delta u = u \left[ 1 - u - \dfrac{v}{b + u + cw} \right],  \\[10pt]
            \dfrac{\partial v}{\partial t} - \Delta \left( \dfrac{D_2^{(1)} (b + cw) + D_2^{(2)} u}{b+u+cw} v \right)  = qv \left( \dfrac{u}{b+u+cw} - d - { \dfrac{ew}{n+v}} \right),  \\[10pt] 
            \dfrac{\partial w}{\partial t} - {\Delta \left( \dfrac{D_3^{(1)} n  + D_3^{(2)} v}{n+v} w \right)}  = ws \left( 1 - \dfrac{w}{v} \right).
        \end{cases}
    \end{equation}
Let us append to the system \eqref{lim_sist_nodim}  the following homogeneous Neumann (no-flux) boundary conditions
\begin{equation}
    \label{BC_nondim}
        \nabla_x u\cdot\textbf{n}=0, \quad \nabla_x v\cdot\textbf{n}=0, \quad \nabla_x w \cdot \textbf{n}=0, \quad \text{on} \ \partial\Omega\times\R^+ .
\end{equation}

\subsection{Biological meaningful equilibria}

The biologically meaningful equilibria are the nonnegative solutions of the system
 \begin{equation}
    \label{sist_staz}
        \begin{cases}
            u \left[ 1 - u - \dfrac{v}{b + u + cw} \right] = 0, \\[10pt]
            qv \left( \dfrac{u}{b+u+cw} - d - \dfrac{ew}{n+v} \right) = 0,  \\[10pt] 
             ws \left( 1 - \dfrac{w}{v} \right) = 0.  
        \end{cases}
    \end{equation}
Avoiding the null solution, the solutions are the boundary equilibrium $E_1=\left(\dfrac{db}{1-d}, \dfrac{b(1-d-db)}{(1-d)^2}, 0 \right)$ describing the extinction of top-predator and defined when $d + d\,b <1$, and the interior equilibrium $E^*=(u^*,v^*,v^*)$ with non-null components representing the coexistence of the three populations.
We refer to \cite{CdLFLM2023} and \cite{FDV2020} for a description of the conditions
on parameters enabling for this interior equilibrium to exist.



For the sequel, we observe that (when the interior equilibrium exists) $w^*=v^*$, because of (\ref{sist_staz})$_3$.

\subsection{Preliminaries to stability}
Let us denote by ${E}^*=({u}^*, {v}^*, {w}^*)$ the coexistence equilibrium (when it exists). Introducing the perturbation fields 
    \begin{align*} 
        U = u - {u}^*,   \\ 
        V = v - {v}^*, \\
        W = w - {w}^*,
    \end{align*}
the system \eqref{lim_sist_nodim} becomes
    \begin{equation}\label{sist_pert}
    \begin{cases}
        \dfrac{\partial U }{\partial t} = a_{11} U  + a_{12} V + a_{13} W + D_1 \Delta U + \Tilde{F}_1, \\  

        \begin{aligned}
        \dfrac{\partial V }{\partial t} = a_{21} U + a_{22} V + a_{23} W &+ \Delta \left(  \dfrac{D_2^{(2)} - D_2^{(1)} }{ (b + {u}^* + c {w}^* )^2} (b + c {w}^*) {v}^* U \right) + \Delta \left( \dfrac{D_2^{(1)} (b + c {w}^*) + D_2^{(2)} {u}^*}{b + {u}^* + c {w}^*} V  \right) \\
        &+ \Delta \left( \dfrac{D_2^{(1)} - D_2^{(2)} }{ (b + {u}^* + c {w}^* )^2} c {u}^* {v}^* W \right) + \Tilde{F}_2,  
        \end{aligned} \\
                
        \dfrac{\partial W}{\partial t} = a_{31} U + a_{32} V + a_{33} W + \Delta \left( \dfrac{D_3^{(2)} - D_3^{(1)}}{(n + {v}^*)^2} n {w}^* V \right) + \Delta \left( \dfrac{D_3^{(1)} n  + D_3^{(2)} {v}^*}{n + {v}^*} W \right) + \Tilde{F}_3,
    \end{cases}
    \end{equation}
where $a_{ij}$ have the form given in \cite{CdLFLM2023}, namely
 \begin{equation}\label{aij}
\mkern-18mu
    \begin{aligned}
       & 
  a_{11} = 1 - 2u^* - \dfrac{v^*(b + c v^*)}{(b + u^* + c v^*)^2}; \quad 
            a_{12} = - \dfrac{u^*}{b + u^* + cv^*}; \quad a_{13}=\frac{cu^*v^*}{(b+u^*+cv^*)^2}; \\
    &  a_{21} = \dfrac{q v^*(b + cv^*)}{(b + u^* + cv^*)^2}; \quad 
            a_{22} =  \dfrac{qu^*}{b + u^* + cv^*}  - \dfrac{qen v^*}{(n + v^*)^2} -qd; \quad
            a_{23} = - \dfrac{cq u^* v^*}{b + u^* + cv^*}- \dfrac{eq v^*}{n+v^*}; \\
    &a_{31} = 0;\quad a_{32} = s\frac{({w}^*)^2}{({v}^*)^2} = s; \quad a_{33} = s\left( 1-2\frac{{w}^*}{{v}^*} \right) = -s;
    \end{aligned}
\end{equation}
and $\tilde{F}_{i}, \ (i=1,2,3)$ are the nonlinear terms. To  system \eqref{sist_pert} are appended the following no-flux boundary conditions
\begin{equation}\label{IC_pert}
    \nabla_x U\cdot\textbf{n}=0, \quad \nabla_x V \cdot \textbf{n}=0, \quad \nabla_x W \cdot \textbf{n}=0, \quad \text{on} \ \partial \Omega \times \R^+.
\end{equation}

\subsection{Linear instability in absence of diffusion}

When all diffusion rates are zero, 
 the linear system associated to system \eqref{sist_pert} evaluated around $E^*$ is
\begin{equation}\label{sist-matr}
        \dfrac{\partial \textbf{X}}{\partial t} = \mathcal{L}^0 \textbf{X},
    \end{equation}
    with
    \begin{equation}
    \label{10}
        \textbf{X} = (U,V,W), \quad  \mathcal{L}^0 = 
        \begin{pmatrix}
            a_{11} & a_{12} & a_{13} \\
            a_{21} & a_{22} & a_{23} \\
            0 & a_{32} & a_{33}  
        \end{pmatrix},
    \end{equation}
where $\textbf{X}$, $\mathcal{L}^0$ are the state vector and the Jacobian matrix evaluated in the coexistence equilibrium $E^*$, respectively.


Since $\mathcal{L}^0 = \mathcal{L}^0(E^*)$,
the parameters $a_{ij}$ are given by (\ref{aij}). 
\medskip

The characteristic equation associated to $\mathcal{L}^0$ is
\begin{equation}
    \lambda^3-I_1^0\lambda^2+I_2^0 \lambda-I_3^0=0,
\end{equation}
where $I_i^0$ ($i=1,2,3$) are the principal invariants of $\mathcal{L}^0$. The equilibrium $E^*$ is linearly stable if the Routh-Hurwitz conditions are verified \cite{Murray1_2002,Murray2_2002,Merkin1997}:
    \begin{equation}\label{RH}
        I_1^0<0, \qquad I_3^0<0, \qquad I_1^0I_2^0-I_3^0<0. 
    \end{equation}
The inequality $I_2^0>0$ is implied by \eqref{RH}. If at least one of \eqref{RH} is (strictly) reversed, $\mathcal{L}^0$ has at least one eigenvalue with positive real part, and hence, in that case, $E^*$ is linearly unstable.


Conditions on the parameters which ensure the stability of an existing interior equilibrium are presented in  \cite{CdLFLM2023}, 
cf. in particular eq. (31) and 
 Theorem $6.1$ in this work.

\subsection{Turing Instability when linear and cross diffusions are added}

In order to find  conditions guaranteeing the occurrence of Turing instability, we now consider linear and cross diffusions. In this case, the linear system associated to system \eqref{sist_pert} evaluated around $E^*$ is 
\begin{equation}\label{sist-matr2}
        \dfrac{\partial \textbf{X}}{\partial t} = \mathcal{L}^0 \textbf{X} + \mathcal{D} \Delta \textbf{X},
    \end{equation}
    with
    \begin{equation}
        \textbf{X} = (U,V,W), \quad  \mathcal{D} = 
        \begin{pmatrix}
            D_{1} & 0 & 0 \\
            l_{21} & l_{22} & l_{23} \\
            0 & l_{32} & l_{33}  
        \end{pmatrix},
    \end{equation}
where $\mathcal{L}^0$ is defined in \eqref{aij}, while $\mathcal{D}$ is the matrix of diffusion coefficients (remember that
$v^* = w^*$)
\begin{equation}
    \begin{gathered}
    \label{lij}
            l_{21} = \dfrac{D_2^{(2)} - D_2^{(1)}}{ (b + u^* + c v^* )^2} (b + c v^*) v^*; \quad 
            l_{22} = \dfrac{D_2^{(1)} (b + c v^*) + D_2^{(2)} u^*}{b + u^* + c v^*}; \quad l_{23} = \dfrac{D_2^{(1)} - D_2^{(2)} }{ (b + u^* + c v^* )^2} c u^* v^*; \\
            l_{32} = \dfrac{D_3^{(2)} - D_3^{(1)}}{(n + v^*)^2} n v^*; \quad
            l_{33} = \dfrac{D_3^{(1)} n  + D_3^{(2)} v^*}{n + v^*}. 
        \end{gathered}
\end{equation}


The dispersion relation governing the eigenvalues $\lambda$ in terms of wavenumber $k$ is now
    \begin{equation}\label{disp_rel}
        \lambda^3 - T_k(k^2) \lambda^2 + I_2(k^2) \lambda - h(k^2) = 0, 
    \end{equation}
    where $T_k(k^2)$, $h(k^2)$ and $I_2(k^2)$ are the principal invariants of the matrix $\mathcal{L}^0 - k^2 D$
 (note that $I_2(0) = I_2^0$).

We recall that according to Routh-Hurwitz condition, we know that all
     roots of the dispersion relation have negative real part if and only if 
        \begin{equation}
        \label{RH2}
            T_k(k^2) < 0, \quad h(k^2) < 0, \quad T_k(k^2) I_2(k^2) - h(k^2) < 0, \quad \forall k^2.
        \end{equation}
        (which implies $I_2(k^2) > 0$). Turing instability occurs when $E^*$ is linearly stable in absence of diffusion and there exists at least one value of $k^2$, such that at least one of conditions \eqref{RH2} is reversed.
We observe that
\begin{equation}
    \begin{aligned}
        T_k(k^2) &= \text{tr}(\mathcal{L}^0 - k^2 D) = - k^2 \text{tr}(D) + I_1^0 = \underbrace{I_1^0}_{-} - k^2 \underbrace{(D_1 + l_{22} + l_{33})}_{+} <0 \\
        I_2(k^2) &= k^4 (\underbrace{D_1 l_{22}}_{+} + \underbrace{D_1 l_{33}}_{+} + \underbrace{l_{22} l_{33}}_{+} - \underbrace{l_{23} l_{32}}_{-})  - k^2 [ \underbrace{D_1 (a_{22} - s)}_{-} + \underbrace{l_{22}(a_{11}-s)}_{-}  \\
        &+\underbrace{l_{33}(a_{11}+ a_{22})}_{-} - \underbrace{l_{21} a_{12}}_{+} - \underbrace{l_{32} a_{23}}_{+} - \underbrace{l_{23}s}_{+}]  + \underbrace{I_2^0}_{+} > 0\\
        h(k^2) = \det(\mathcal{L}^0 - k^2 \mathcal{D}) &= -k^6 \underbrace{\det( \mathcal{D} )}_{+} + k^4[\underbrace{a_{11} l_{22} l_{33}}_{-} + \underbrace{a_{22}D_1 l_{33}}_{-} - \underbrace{D_1 l_{22}s}_{+} - \underbrace{a_{11} l_{23} l_{32}}_{+} - \underbrace{D_1 l_{23} s}_{+} - \underbrace{D_1 l_{32} a_{23}}_{+} \\
        &- \underbrace{l_{33} l_{21} a_{12}}_{+} + \underbrace{l_{21} l_{32} a_{13}}_{+}] + k^2 [ \underbrace{D_1(a_{22} s + a_{23} s}_{-}) + \underbrace{l_{22} a_{11} s}_{-} - \underbrace{l_{33} a_{11} a_{22}}_{+} \\
        &+ \underbrace{l_{33} a_{12} a_{21}}_{-} -\underbrace{a_{13} l_{21}s}_{-} - \underbrace{a_{13} l_{32} a_{21}}_{-} + \underbrace{a_{11} l_{23}s}_{-} + \underbrace{a_{11} l_{32} a_{23}}_{-} - \underbrace{s l_{21} a_{12}}_{+} ] + \underbrace{I_3^0}_{-}  \\
        &= -k^6 \det(\mathcal{D}) + k^4[ a_{11} l_{22} l_{33} + a_{22}D_1 l_{33} - D_1 l_{22}s - a_{11} l_{23} l_{32} - D_1 l_{23} s - D_1 l_{32} a_{23}  \\
        &+ (-l_{33} l_{21} a_{12} + l_{21} l_{32} a_{13})] + k^2 [ D_1(a_{22} s + a_{23} s) + l_{22} a_{11} s - l_{33} a_{11} a_{22} \\
        & + (l_{33} a_{12} a_{21} - a_{13} l_{32} a_{21}) + (- a_{13} l_{21}s - s l_{21} a_{12}) + a_{11} l_{23}s + a_{11} l_{32} a_{23} ] + I_3^0
    \end{aligned}
    \end{equation}

    Since $D_1$ and all $D_{i}^j, i=2,3; j=1,2$ are strictly positive, then $l_{22} > 0$, $l_{33} > 0$.
    
We expect the diffusion rate $D_{2,3}^{(2)}$ of handling predators to be smaller than the diffusion rate of searching predators $D_{2,3}^{(1)}$. Therefore, since $D_2^{(2)} < D_2^{(1)} \Longrightarrow l_{21} < 0$, $l_{23} > 0$, and similarly, since $D_3^{(2)} < D_3^{(1)} \Longrightarrow l_{32} < 0$.

The terms $l_{21}l_{32}a_{13}$, $a_{13}l_{21}s$ and $a_{13}l_{32}a_{21}$ make the sign of $h(k^2)$ undefined. However, since (remember that $v^* = w^*$)
    \begin{equation}
        l_{33} > \lvert{l_{32}\rvert} \quad  \text{ and } \quad \lvert{a_{12}\rvert} > a_{13}
    \end{equation}


    and remembering the sign of each $a_{ij}$, we get
    \begin{equation}
    \begin{aligned}
        & -l_{33} l_{21} a_{12} + l_{21} l_{32} a_{13} = - \lvert{l_{21}\rvert} \bigl[ l_{33}  \lvert{a_{12}\rvert} - a_{13}  \lvert{l_{32}\rvert} \bigr] <  - \lvert{l_{21}\rvert} \bigl[ l_{33}  \lvert{a_{12}\rvert} - l_{33} a_{13}\bigr] < 0 \\[10pt]
        & l_{33} a_{12} a_{21} - a_{13} l_{32} a_{21} = - a_{21} \bigl[ - a_{12} l_{33} + a_{13} l_{32}\bigr] < - a_{21} \bigl[ -\lvert{l_{32} \rvert} a_{12} + \lvert{a_{12}\rvert} l_{32} \bigr] = 0 \\[10pt]
        & - a_{13} l_{21}s - s l_{21} a_{12} = s \lvert{l_{21}\rvert} \bigl[a_{13} + a_{12}\bigr] < s \lvert{l_{21}\rvert} \bigl[\lvert{a_{12}\rvert} + a_{12}\bigr] = 0. \\
    \end{aligned}
    \end{equation}

We obtain that the conditions \eqref{RH2} are verified, and no one is reversed. So, 
Turing instability can't occur.

\section{Rigorous results for the passage to the limit in microscopic models}\label{rig_res}



We start from system \eqref{lim_sist} -- \eqref{soinit} in a smooth bounded open subset $\Omega$ of $\R^N$ and consider the limit when $\varepsilon \to 0$, and when $\delta>0$ is fixed. In the notation of the concentrations, we drop therefore the superscript $\delta$, so that the system writes (from now on, we write $t$ instead of $\Tilde{t}$ for the sake of simplicity)

\begin{equation}
\label{lim_sist_rig}
        \begin{cases}
            \dfrac{\partial P^{\varepsilon}}{\partial {t}} = \Tilde{r} \left(1 - \dfrac{P^{\varepsilon}}{K} \right) P^{\varepsilon} - \dfrac{\Tilde{\alpha} P^{\varepsilon} M_s^{\varepsilon}}{1 + \Tilde{c}\, \left(T_s^{\varepsilon}+T_h^{\varepsilon}\right)} + d_1 \Delta P^{\varepsilon}, \\[10pt]
            \dfrac{\partial M_s^{\varepsilon}}{\partial {t}} = \dfrac{1}{\delta} \left[  - \dfrac{\Tilde{\alpha} P^{\varepsilon} M_s^{\varepsilon}}{1 + \Tilde{c}\, \left(T_s^{\varepsilon}+T_h^{\varepsilon}\right)} + \Tilde{\gamma} M_h^{\varepsilon} \right] + \Gamma M_h^{\varepsilon} - \Tilde{\mu} M_s^{\varepsilon} - \Tilde{\beta} M_s^{\varepsilon} \, T_s^{\varepsilon} + d_2^{(1)} \Delta M_s^{\varepsilon}, \\[10pt]
            \dfrac{\partial M_h^{\varepsilon}}{\partial {t}} = \dfrac{1}{\delta} \left[  \dfrac{\Tilde{\alpha} P^{\varepsilon} M_s^{\varepsilon}}{1 + \Tilde{c}\, \left(T_s^{\varepsilon}+T_h^{\varepsilon}\right)} - \Tilde{\gamma} M_h^{\varepsilon} \right] - \Tilde{\mu} M_h^{\varepsilon} - \Tilde{\beta} M_h^{\varepsilon} \, T_s^{\varepsilon} + d_2^{(2)} \Delta M_h^{\varepsilon}, \\[10pt]
            \dfrac{\partial T_s^{\varepsilon}}{\partial {t}} = \dfrac{1}{\varepsilon} \left[ - \Tilde{\beta} (M_s^{\varepsilon} + M_h^{\varepsilon}) T_s^{\varepsilon} + \Tilde{\eta} T_h^{\varepsilon} \right] + \Tilde{s} \,T_s^{\varepsilon}\,  \left(1 - \dfrac{T_h ^{\varepsilon} + T_s^{\varepsilon} }{ \Tilde{m}(M_s^{\varepsilon} + M_h^{\varepsilon})} \right) + d_3^{(1)} \Delta T_s^{\varepsilon}, \\[10pt]
            \dfrac{\partial T_h^{\varepsilon}}{\partial {t}} = \dfrac{1}{\varepsilon} \left[  \Tilde{\beta} (M_s^{\varepsilon} + M_h^{\varepsilon}) T_s^{\varepsilon} - \Tilde{\eta} T_h^{\varepsilon} \right] + \Tilde{s} \,T_h^{\varepsilon}\,  \left(1 - \dfrac{T_h ^{\varepsilon} + T_s^{\varepsilon} }{ \Tilde{m}(M_s^{\varepsilon} + M_h^{\varepsilon})} \right) + d_3^{(2)} \Delta T_h^{\varepsilon}, \\
        \end{cases}
    \end{equation}
    together with homogeneous Neumann boundary conditions
    \begin{equation}
    \label{titii}
        \nabla_x P^{\varepsilon}\cdot\textbf{n}=0, \quad \nabla_x M_s^{\varepsilon}\cdot\textbf{n}=0, \quad  \nabla_x M_h^{\varepsilon}\cdot\textbf{n}=0\quad \nabla_x T_s^{\varepsilon}\cdot\textbf{n}=0,\quad \nabla_x T_h^{\varepsilon}\cdot\textbf{n}=0, 
    \end{equation}
    for all $t \in \R^{+}$, $\textbf{x} \in \partial \Omega$, where $\textbf{n} := n(\textbf{x})$ denotes the exterior normal to $\Omega$ at a point $\textbf{x} \in \partial \Omega$, and with initial data (for $x \in \Omega$)
    \begin{equation}
    \label{titi}
    \begin{aligned}
        & \quad \quad \quad \quad P^{\varepsilon}(0,x) =  P_{in}(x), \quad M_s^{\varepsilon}(0,x) =  M_{s, in}(x), \\
        & M_h^{\varepsilon}(0,x) =  M_{h, in}(x) \quad T_s^{\varepsilon}(0,x) =  T_{s, in}(x), \quad T_h^{\varepsilon}(0,x) =  T_{h, in}(x).
    \end{aligned}
    \end{equation}

We write down a theorem stating what is the limit of the solutions of system \eqref{lim_sist_rig} -- \eqref{titi}
 when $\varepsilon \to 0$.
\medskip

\begin{thm} \label{thtrun}
    Let $\Omega$ be a smooth bounded open subset of $\R^N$ for $N=1,2$, let $d_1 , d_2^{(1)}, d_2^{(2)}, d_3^{(1)}, d_3^{(2)} > 0$ be diffusion rates, $\Tilde{\alpha}, \Tilde{\beta}, \Tilde{\gamma}, \Gamma, \Tilde{\eta}, \Tilde{\mu}, \Tilde{c}, K, \Tilde{m}, \Tilde{r}, \Tilde{s}>0, \delta > 0$ be parameters, and $P_{in} :=  P_{in}(x) \ge 0, M_{s, in} :=  M_{s, in}(x) \ge D > 0, M_{h, in} :=  M_{h, in}(x) \ge D >0,$ and $T_{s, in} :=  T_{s, in}(x) \ge 0, T_{h, in} :=  T_{h, in}(x) \ge 0$ be  initial data
lying in $C^2(\overline{\Omega})$ and compatible with Neumann boundary condition. 
\par
    Then for each $\varepsilon > 0$, there exists a unique global  (for $t \ge 0$) classical solution $(P^{\varepsilon}, M_s^{\varepsilon}, M_h^{\varepsilon}, T_s^{\varepsilon}, T_h^{\varepsilon})$ to system \eqref{lim_sist_rig}-\eqref{titi}, such that $P^{\varepsilon}, M_s^{\varepsilon}, M_h^{\varepsilon}$ are nonnegative, and $M_s^{\varepsilon}, M_h^{\varepsilon} \ge D_T >0$ on $[0,T]$ (with $D_T$ depending only on $T$ and the parameters of the system, including $\delta$, but not $\varepsilon$) for all $T>0$.
    \par 
Moreover, when $\varepsilon \to 0$, one can extract from $P^{\varepsilon}, M_s^{\varepsilon}, M_h^{\varepsilon}$ a subsequence which is bounded in $L^{\infty}([0,T] \times \Omega)$ for all $T > 0$ and converges a.e. towards  functions $P, M_s, M_h$ respectively, lying in $L^{\infty}([0,T] \times \Omega)$ for all $T>0$, and one can extract from $T_s^{\varepsilon}$ and $T_h^{\varepsilon}$  a subsequence which converges strongly (and a.e.) in $L^{2 + \zeta}([0,T] \times \Omega)$ towards functions $T_s$ and $T_h$ respectively, lying in $L^{2 + \zeta}([0,T] \times \Omega)$, for all $T > 0$ and some $\zeta > 0$. 
    Moreover, $M_s, M_h \ge D_T > 0$ (for any $T > 0$). \\
    Finally, $P, M_s, M_h, T_s, T_h$ are ``very-weak'' solutions of the reaction-cross diffusion system
    \begin{equation}
    \label{senza eps}
        \begin{cases}
            \dfrac{\partial P}{\partial t} = \Tilde{r} \left(1 - \dfrac{P}{K} \right) P - \dfrac{\Tilde{\alpha} P M_s}{1 + \Tilde{c}\, \left(T_s+T_h \right)}  + d_1 \Delta P, \\[10pt]
            \dfrac{\partial M_s}{\partial t} = \dfrac{1}{\delta} \left[  - \dfrac{\Tilde{\alpha} P M_s}{1 + \Tilde{c}\, \left(T_s+T_h \right)} + \Tilde{\gamma} M_h \right] + \Gamma M_h - \Tilde{\mu} M_s - \Tilde{\beta} M_s\, T_s + d_2^{(1)} \Delta M_s, \\[10pt]
            \dfrac{\partial M_h}{\partial t} = \dfrac{1}{\delta} \left[  \dfrac{\Tilde{\alpha} P M_s}{1 + \Tilde{c}\, \left(T_s+T_h \right)} - \Tilde{\gamma} M_h \right] - \Tilde{\mu} M_h - \Tilde{\beta} M_h \, T_s + d_2^{(2)} \Delta M_h ,\\[10pt]
            \dfrac{\partial (T_h + T_s)}{\partial t} =  \Tilde{s} \,(T_s + T_h) \,  \left(1 - \dfrac{(T_s + T_h)}{ \Tilde{m}(M_s + M_h) } \right) + \Delta \left( d_3^{(1)}  T_s +  d_3^{(2)}  T_h \right) ,\\[10pt]
            \Tilde{\beta} (M_s + M_h) T_s =  \Tilde{\eta} T_h , \\ 
        \end{cases}
    \end{equation}
    together with the homogeneous Neumann boundary conditions
    \begin{equation}
    \label{CBo}
       \nabla_x P\cdot\textbf{n}=0, \quad \nabla_x M_s\cdot\textbf{n}=0, \quad  \nabla_x M_h\cdot\textbf{n}=0\quad \nabla_x (T_s + T_h) \cdot\textbf{n}=0, 
    \end{equation}
    for all $t \in \R_+$, $\textbf{x}$ in $\partial \Omega$, 
and with initial data (for $x \in \Omega$)
    \begin{equation} 
    \label{titi2}
       \begin{aligned}
           & P(0,x) =  P_{in}(x), \quad M_s(0,x) = M_{s, in}(x),\quad \\ M_h(0,x) = & M_{h, in}(x), \quad (T_s + T_h)(0,x) = T_{s, in}(x) +  T_{h, in}(x).
       \end{aligned}
    \end{equation}
 By ``very-weak'' solution, we mean that the last identity of system \eqref{senza eps}, that is
   \begin{equation}\label{exf}
 \Tilde{\beta} (M_s + M_h) T_s =  \Tilde{\eta} T_h,
\end{equation}
 holds a.e., and that for all $\xi, \varphi, \chi, \psi \in {C}_c^2([0, +\infty) \times \Bar{\Omega})$ such that $\nabla_x \xi \cdot \textbf{n} |_{\partial \Omega} = 0, \quad \nabla_x \varphi \cdot \textbf{n} |_{\partial \Omega} = 0, \quad \nabla_x \chi \cdot \textbf{n} |_{\partial \Omega} = 0, \quad \nabla_x \psi \cdot \textbf{n} |_{\partial \Omega} = 0, $ the following identities hold:
    
    \begin{equation}  
    \label{1}
        - \int_0^{\infty} \int_{\Omega} P \partial_t \varphi - \int_{\Omega} \varphi(0, \cdot) P_{in} - d_1 \int_0^{\infty} \int_{\Omega} P \Delta \varphi = \int_0^{\infty} \int_{\Omega} \left[ \Tilde{r} \left(1 - \dfrac{P}{K} \right) P -  \dfrac{\Tilde{\alpha} P M_s}{1 + \Tilde{c}\, \left(T_s+T_h \right)}  \right] \varphi, 
    \end{equation}
    \begin{equation}
    \label{2}
        \begin{aligned}
            - \int_0^{\infty} \int_{\Omega} M_s \partial_t \psi - \int_{\Omega} \psi(0, \cdot) M_{s,in} - \int_0^{\infty} \int_{\Omega} d_2^{(1)} M_s \Delta \psi  &= \int_0^{\infty} \int_{\Omega}  \left[ \Gamma M_h - \Tilde{\mu} M_s - \Tilde{\beta} M_s T_s \right] \psi \\
            &+ \int_0^{\infty} \int_{\Omega} \dfrac{1}{\delta} \left[  - \dfrac{\Tilde{\alpha} P M_s}{1 + \Tilde{c}\, \left(T_s+T_h \right)} + \Tilde{\gamma} M_h \right] \psi, \\[10pt]
        \end{aligned}
    \end{equation}

       \begin{equation}
       \label{3}
           \begin{aligned}
               - \int_0^{\infty} \int_{\Omega} M_h \partial_t \chi - \int_{\Omega} \chi(0, \cdot) M_{h,in}- \int_0^{\infty} \int_{\Omega} d_2^{(2)} M_h \Delta \chi &= \int_0^{\infty} \int_{\Omega}  \left[ - \Tilde{\mu} M_h - \Tilde{\beta} M_h T_s \right] \chi \\
               &+ \int_0^{\infty} \int_{\Omega} \dfrac{1}{\delta} \left[ \dfrac{\Tilde{\alpha} P M_s}{1 + \Tilde{c}\, \left(T_s+T_h \right)} - \Tilde{\gamma} M_h \right] \chi,
           \end{aligned}
       \end{equation}

    \begin{equation} \label{tteq}
        \begin{aligned}
            - \int_0^{\infty} \int_{\Omega} (T_s + T_h) \partial_t \xi - \int_{\Omega} \xi(0, \cdot) (T_{s, in} + T_{h,in}) &- \int_0^{\infty} \int_{\Omega} \left( d_3^{(1)}  T_s +  d_3^{(2)}  T_h \right) \Delta \xi = \\
            &= \int_0^{\infty} \int_{\Omega}  \left[ \Tilde{s} \,(T_s + T_h)\,  \left(1 - \dfrac{(T_s + T_h)}{ \Tilde{m} (M_h + M_s)} \right) \right] \xi.
        \end{aligned}
    \end{equation}

    Note that the reaction-diffusion system \eqref{senza eps} -- \eqref{titi2} 
 can be rewritten in the simpler form (with $T_* : = T_s + T_h$)
    \begin{equation}
    \label{simpform}
        \begin{cases}
            \dfrac{\partial P}{\partial t} = \Tilde{r} \left(1 - \dfrac{P}{K} \right) P - \dfrac{\Tilde{\alpha} \Tilde{\gamma} P(M_s + M_h)}{\Tilde{\gamma} + \Tilde{\alpha} P + \tilde{c} \tilde{\gamma}T_*} + d_1 \Delta P, \\[10pt]
            
            \dfrac{\partial M_s}{\partial t} = \dfrac{1}{\delta} \left[  - \dfrac{\Tilde{\alpha} P M_s}{1 + \Tilde{c}\, T_*} + \Tilde{\gamma} M_h \right] + \Gamma M_h - \Tilde{\mu} M_s -   \dfrac{ \Tilde{\beta} \Tilde{\eta} M_s T_*}{\Tilde{\eta} + \Tilde{\beta}(M_s + M_h)} + d_2^{(1)} \Delta M_s ,\\[10pt]
            \dfrac{\partial M_h}{\partial t} = \dfrac{1}{\delta} \left[  \dfrac{\Tilde{\alpha} P M_s}{1 + \Tilde{c}\, T} - \Tilde{\gamma} M_h \right] - \Tilde{\mu} M_h -  \dfrac{\Tilde{\beta} \Tilde{\eta}  M_h T_*}{\Tilde{\eta} + \Tilde{\beta} (M_s + M_h)} + d_2^{(2)} \Delta M_h, \\[10pt]
            
              \dfrac{\partial T_*}{\partial t} = \Tilde{s} \left(1 - \dfrac{T_*}{\Tilde{m} (M_h + M_s)} \right) T_*  + {\Delta \left( \dfrac{(d_3^{(1)} \Tilde{\eta} + d_3^{(2)} \Tilde{\beta}(M_h + M_s))}{\Tilde{\eta} + \Tilde{\beta} (M_h + M_s)} T_* \right)},
        \end{cases}
    \end{equation}
    with the following boundary conditions 
    \begin{equation}
    \nabla_x P\cdot\textbf{n}=0, \quad \nabla_x M_s\cdot\textbf{n}=0, \quad  \nabla_x M_h\cdot\textbf{n}=0\quad \nabla_x T_* \cdot\textbf{n}=0,
    \end{equation}
 for all $t \in \R_+$, $\textbf{x}$ in $\partial \Omega$, 
and initial data (for $x\in \Omega$)
    \begin{equation} 
    \label{titi3}
       \begin{aligned}
           & P(0,x) =  P_{in}(x), \quad M_s(0,x) = M_{s, in}(x),\quad \\ 
           M_h(0,x) &=  M_{h, in}(x), \quad T_*(0,x) = T_{s, in}(x) +  T_{h, in}(x).
       \end{aligned}
    \end{equation}
\end{thm}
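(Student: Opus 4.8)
The plan is to proceed in four stages: (1) well-posedness of the $\varepsilon$-problem for fixed $\varepsilon$; (2) a priori bounds uniform in $\varepsilon$; (3) compactness, extraction of converging subsequences and passage to the limit in the weak formulation; and a concluding remark on where the difficulty lies.

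\emph{Stage 1 (well-posedness, $\varepsilon$ fixed).} System \eqref{lim_sist_rig} is semilinear (the diffusion is linear with constant coefficients), so for each $\varepsilon>0$ local existence and uniqueness of a classical solution follows from analytic-semigroup theory and a fixed-point argument; the only non-smoothness of the nonlinearity is the factor $1/(M_s^{\varepsilon}+M_h^{\varepsilon})$, which is harmless once positivity is known. Nonnegativity of the five unknowns follows from the quasi-positive structure (the reaction term in the $i$-th equation is $\ge 0$ whenever the $i$-th unknown vanishes and the others are $\ge 0$). For global existence, $P^{\varepsilon}$ is bounded by comparison with the logistic ODE $\dot y=\tilde r(1-y/K)y$; discarding the nonpositive reaction terms and using $\tilde\alpha P^{\varepsilon}M_s^{\varepsilon}/(1+\tilde c(T_s^{\varepsilon}+T_h^{\varepsilon}))\le\tilde\alpha\|P^{\varepsilon}\|_{\infty}M_s^{\varepsilon}$ shows that $(M_s^{\varepsilon},M_h^{\varepsilon})$ is a subsolution of an $\varepsilon$-independent linear \emph{cooperative} reaction--diffusion system, whence $M_s^{\varepsilon},M_h^{\varepsilon}\le C(T)$; finally the $(T_s^{\varepsilon},T_h^{\varepsilon})$-subsystem is quasi-positive with at most linear mass production, since adding its two equations cancels the $1/\varepsilon$ terms and leaves $\partial_t(T_s^{\varepsilon}+T_h^{\varepsilon})\le\tilde s(T_s^{\varepsilon}+T_h^{\varepsilon})+\Delta(d_3^{(1)}T_s^{\varepsilon}+d_3^{(2)}T_h^{\varepsilon})$, and global existence for two-component systems of this type is classical.

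\emph{Stage 2 (uniform bounds).} The $L^{\infty}$ bounds on $P^{\varepsilon},M_s^{\varepsilon},M_h^{\varepsilon}$ from Stage~1 are uniform in $\varepsilon$. Summing the $T_s^{\varepsilon}$- and $T_h^{\varepsilon}$-equations and using the above mass-production bound, the duality estimates of Pierre (in their improved $L^{2+\zeta}$ form) give $\|T_s^{\varepsilon}\|_{L^{2+\zeta}([0,T]\times\Omega)},\|T_h^{\varepsilon}\|_{L^{2+\zeta}([0,T]\times\Omega)}\le C(T)$ for some $\zeta>0$, uniformly in $\varepsilon$ (the estimate sees only $d_3^{(1)},d_3^{(2)}$ and the production bound). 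Next, each of $M_s^{\varepsilon},M_h^{\varepsilon}$ satisfies a differential inequality $\partial_t M-d\Delta M\ge -(c_0+c_1T_s^{\varepsilon})M$ with $c_0,c_1$ depending only on $\delta$ and the parameters; writing $M$ via the Feynman--Kac/Duhamel formula for the reflected diffusion started from data $\ge D>0$ and pairing the $L^{2+\zeta}$ bound on $T_s^{\varepsilon}$ against the Neumann heat kernel in $L^{(2+\zeta)'}(\Omega)$ — whose time singularity is integrable precisely because $N<2(1+\zeta)$, i.e.\ $N\le2$ — one obtains $M_s^{\varepsilon},M_h^{\varepsilon}\ge D_T>0$ with $D_T$ independent of $\varepsilon$. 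An $L^2$ energy estimate then gives $M_s^{\varepsilon},M_h^{\varepsilon}$ bounded in $L^2(0,T;H^1)$ and, through the equations (the worst reaction term $\tilde\beta M_s^{\varepsilon}T_s^{\varepsilon}$ lying in $L^{2+\zeta}\subset L^2$, $\delta$ being fixed), $\partial_tM_s^{\varepsilon},\partial_tM_h^{\varepsilon}$ bounded in $L^2(0,T;H^{-1})$; likewise for $P^{\varepsilon}$.

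\emph{Stage 3 (compactness and limit).} Aubin--Lions yields, along a subsequence, $P^{\varepsilon}\to P$, $M_s^{\varepsilon}\to M_s$, $M_h^{\varepsilon}\to M_h$ a.e.\ and in $L^2$, the limits lying in $L^{\infty}$ with $M_s,M_h\ge D_T$. Put $Z^{\varepsilon}:=T_s^{\varepsilon}+T_h^{\varepsilon}$ and $W^{\varepsilon}:=M_s^{\varepsilon}+M_h^{\varepsilon}$. The $T_s^{\varepsilon}$-equation gives
\[
R^{\varepsilon}:=\tilde\beta W^{\varepsilon}T_s^{\varepsilon}-\tilde\eta T_h^{\varepsilon}=\varepsilon\Bigl(-\partial_tT_s^{\varepsilon}+\tilde s\,T_s^{\varepsilon}\bigl(1-\tfrac{Z^{\varepsilon}}{\tilde m W^{\varepsilon}}\bigr)+d_3^{(1)}\Delta T_s^{\varepsilon}\Bigr),
\]
and an elementary computation gives $d_3^{(1)}T_s^{\varepsilon}+d_3^{(2)}T_h^{\varepsilon}=\kappa(W^{\varepsilon})Z^{\varepsilon}+\frac{d_3^{(1)}-d_3^{(2)}}{\tilde\eta+\tilde\beta W^{\varepsilon}}R^{\varepsilon}$ with $\kappa(m)=\frac{d_3^{(1)}\tilde\eta+d_3^{(2)}\tilde\beta m}{\tilde\eta+\tilde\beta m}$ bounded above and below by positive constants. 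Hence $Z^{\varepsilon}$ solves a uniformly parabolic equation $\partial_tZ^{\varepsilon}-\Delta(\kappa(W^{\varepsilon})Z^{\varepsilon})=g^{\varepsilon}+O(\varepsilon)$ with source $g^{\varepsilon}=\tilde sZ^{\varepsilon}(1-Z^{\varepsilon}/(\tilde m W^{\varepsilon}))$ bounded in $L^{1+\zeta/2}$ (thanks to $W^{\varepsilon}\ge D_T$); a parabolic regularity estimate of Boccardo--Gallou\"et type, available since $N\le2$, bounds $Z^{\varepsilon}$ in $L^{q}(0,T;W^{1,q})$ for some $q>1$, so Aubin--Lions plus uniform integrability give $Z^{\varepsilon}\to Z$ a.e.\ and strongly in $L^{2+\zeta'}$ for $\zeta'<\zeta$. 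A microscopic entropy/energy estimate for the fast reaction (testing it with a shifted-logarithmic multiplier, or directly with $R^{\varepsilon}$, and absorbing the diffusion and the $W^{\varepsilon}$-variation contributions) yields $\|R^{\varepsilon}\|_{L^2([0,T]\times\Omega)}^2=O(\varepsilon)$, hence $R^{\varepsilon}\to0$ strongly in $L^{2+\zeta'}$. Since $T_s^{\varepsilon}=(\tilde\eta Z^{\varepsilon}+R^{\varepsilon})/(\tilde\eta+\tilde\beta W^{\varepsilon})$ and $T_h^{\varepsilon}=Z^{\varepsilon}-T_s^{\varepsilon}$, we obtain $T_s^{\varepsilon}\to\tilde\eta Z/(\tilde\eta+\tilde\beta(M_s+M_h))=:T_s$ and $T_h^{\varepsilon}\to\tilde\beta(M_s+M_h)Z/(\tilde\eta+\tilde\beta(M_s+M_h))=:T_h$ strongly in $L^{2+\zeta'}$ (relabel $\zeta$), and in particular $\tilde\beta(M_s+M_h)T_s=\tilde\eta T_h$ a.e., i.e.\ \eqref{exf}. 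Passing to the limit in \eqref{1}--\eqref{tteq} is then routine: the reaction terms are products of the above sequences, uniformly bounded in $L^{\infty}$ or $L^{2+\zeta}$ and a.e.\ convergent, hence convergent in $L^1_{\mathrm{loc}}$ by Vitali (the factors $1/(1+\tilde c Z^{\varepsilon})$ are bounded and a.e.\ convergent, and $(Z^{\varepsilon})^2/W^{\varepsilon}$ is bounded in $L^{1+\zeta/2}$); the diffusion and time-derivative terms pass to the limit because $\Delta$ and $\partial_t$ fall on the fixed smooth test functions; the $1/\delta$-terms are unchanged since $\delta$ is fixed; and for \eqref{tteq} one adds the weak forms of the $T_s^{\varepsilon}$- and $T_h^{\varepsilon}$-equations before letting $\varepsilon\to0$. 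Rewriting the resulting identities with $T_*=T_s+T_h$ and the explicit resolution of the constraint gives \eqref{simpform}.

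\emph{Main obstacle.} The heart of the matter is the compactness of the \emph{fast} variables in Stage~3: one must produce strong limits of $T_s^{\varepsilon},T_h^{\varepsilon}$ from only an $L^{2+\zeta}$ bound. This splits into (a) compactness of the conserved quantity $Z^{\varepsilon}$, which is delicate because its diffusion appears in the form $\Delta(d_3^{(1)}T_s^{\varepsilon}+d_3^{(2)}T_h^{\varepsilon})$ and its source is merely $L^{1+}$-integrable — precisely the situation where the restriction $N\le2$ is used; and (b) the quantitative $O(\varepsilon)$ relaxation of the fast reaction, whose entropy/energy estimate must be robust against the space--time dependence of the equilibrium through $W^{\varepsilon}$ and against the diffusion cross-terms. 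Tied to (a), and also requiring $N\le2$, is the $\varepsilon$-uniform lower bound $M_s^{\varepsilon},M_h^{\varepsilon}\ge D_T$ of Stage~2, without which neither the $T$-equations nor the source $g^{\varepsilon}$ would stay non-singular as $\varepsilon\to0$.
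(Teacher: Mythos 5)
Your overall architecture (fixed-$\varepsilon$ well-posedness, $\varepsilon$-uniform $L^{\infty}$ bounds for $P^{\varepsilon},M_s^{\varepsilon},M_h^{\varepsilon}$, duality $L^{2+\zeta}$ bound for $T_s^{\varepsilon}+T_h^{\varepsilon}$, a lower bound for $M_s^{\varepsilon},M_h^{\varepsilon}$ obtained by playing the $L^{2+\zeta}$ bound on $T_s^{\varepsilon}$ against the heat semigroup with $N\le 2$, then relaxation of the fast reaction) is the paper's. The genuine gap is the quantitative relaxation step. You claim that an energy estimate (``testing with $R^{\varepsilon}$'' or a shifted logarithm) gives $\|R^{\varepsilon}\|^2_{L^2([0,T]\times\Omega)}=O(\varepsilon)$ for $R^{\varepsilon}=\tilde\beta M^{\varepsilon}T_s^{\varepsilon}-\tilde\eta T_h^{\varepsilon}$. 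Testing the $T_s^{\varepsilon}$- and $T_h^{\varepsilon}$-equations with $\tilde\beta M^{\varepsilon}T_s^{\varepsilon}$ and $\tilde\eta T_h^{\varepsilon}$ does produce the dissipation $\varepsilon^{-1}\int (R^{\varepsilon})^2$, but the time- and space-variation of the weight $M^{\varepsilon}$ produces remainders of the type $\int_0^T\!\!\int_\Omega (T_s^{\varepsilon})^2\,(|\partial_t M^{\varepsilon}|+|\nabla M^{\varepsilon}|^2+|\Delta M^{\varepsilon}|)$, and with the only $\varepsilon$-uniform bounds available ($T_s^{\varepsilon}$ in $L^{2+\zeta}$ with $\zeta$ small from the duality lemma; $\partial_t M^{\varepsilon}, D^2 M^{\varepsilon}$ in $L^{2+\zeta}$ by maximal regularity) these are not integrable: one would need $T_s^{\varepsilon}\in L^{2(2+\zeta)'}$, an exponent strictly larger than $2+\zeta$ when $\zeta<1$. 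This is precisely why the paper avoids a quadratic functional and works instead with $\int_\Omega\big[\tilde\eta^{\alpha}(T_h^{\varepsilon})^{1+\alpha}+\tilde\beta^{\alpha}(M^{\varepsilon})^{\alpha}(T_s^{\varepsilon})^{1+\alpha}\big]/(1+\alpha)$ for $\alpha>0$ small (roughly $\alpha\le\zeta$), whose remainders involve only $(T_s^{\varepsilon})^{1+\alpha}$ and which yields the weaker bound $\int_0^T\!\!\int_\Omega(\tilde\eta T_h^{\varepsilon}-\tilde\beta M^{\varepsilon}T_s^{\varepsilon})\big((\tilde\eta T_h^{\varepsilon})^{\alpha}-(\tilde\beta M^{\varepsilon}T_s^{\varepsilon})^{\alpha}\big)\le C_T\varepsilon$, then upgraded by elementary inequalities to $\|R^{\varepsilon}\|_{L^1}=O(\sqrt{\varepsilon})$ --- which, together with the $L^{2+\zeta}$ bound, suffices for all your subsequent deductions. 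Your stronger $L^2$ claim is not a shortcut; it is the missing (and, with these bounds, unobtainable) core estimate.

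A second, related defect is the order of your Stage 3. You derive compactness of $Z^{\varepsilon}=T_s^{\varepsilon}+T_h^{\varepsilon}$ from the rewritten equation $\partial_t Z^{\varepsilon}-\Delta(\kappa(W^{\varepsilon})Z^{\varepsilon})=g^{\varepsilon}+O(\varepsilon)$, but the term you label $O(\varepsilon)$ is $\Delta\big[(d_3^{(1)}-d_3^{(2)})R^{\varepsilon}/(\tilde\eta+\tilde\beta W^{\varepsilon})\big]$, and the smallness of $R^{\varepsilon}$ is exactly what you only establish afterwards; the identity $R^{\varepsilon}=\varepsilon(-\partial_t T_s^{\varepsilon}+\dots)$ does not make it small, since $\partial_t T_s^{\varepsilon}$ and $\Delta T_s^{\varepsilon}$ are not uniformly bounded in $\varepsilon$. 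As written, the Boccardo--Gallou\"et step is circular. In the paper, spatial compactness comes out of the dissipation terms of the same small-$\alpha$ functional, namely $\int_0^T\!\!\int_\Omega (T_h^{\varepsilon})^{\alpha-1}|\nabla T_h^{\varepsilon}|^2\le C_T$ and its analogue for $T_s^{\varepsilon}$, which give $L^1$ bounds on $\nabla T_s^{\varepsilon},\nabla T_h^{\varepsilon}$; combined with a bound on $\partial_t Z^{\varepsilon}$ in $L^2([0,T];H^{-2})+L^{1+\zeta/2}$ and Aubin--Lions, this yields a.e.\ convergence of $Z^{\varepsilon}$, after which your algebra recovering $T_s,T_h$ from $Z$ and the constraint, and the passage to the limit in the very-weak formulations, coincide with the paper's. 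So the skeleton is right, but the heart of the proof --- a functional making both the relaxation estimate and the gradient compactness compatible with the mere $L^{2+\zeta}$ duality bound --- is missing from your proposal.
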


\begin{proof}
{\bf{First step}}: Existence of smooth solutions to system  \eqref{senza eps} -- \eqref{titi2}
\medskip

 We recall that $\Omega$ is a smooth open bounded subset of $\R^N$ ($N =1$ or $2$) and $\varepsilon >0$ is given. Our aim in this step is to prove that there exists a (unique) classical solution to  system \eqref{lim_sist_rig} -- \eqref{titi}.

For any $\nu \in ]0,1[$, we introduce the following approximating system (we use a notation in which the dependence with respect to $\varepsilon$ of the concentrations is dropped, since $\varepsilon>0$ is given, and in which the dependence with respect to $\nu$ is explicitly written):

 \begin{equation}
\label{lim_sist_appr}
\mkern-36mu
        \begin{cases}
            \dfrac{\partial P^{\nu}}{\partial t} = \Tilde{r} \left(1 - \dfrac{P^{\nu}}{K\, (1 +\nu [P^{\nu}]^2)} \right) \, \dfrac{P^{\nu}}{1 + \nu [P^{\nu}]^2} - \dfrac{\Tilde{\alpha} \dfrac{P^{\nu}}{1 + \nu [P^{\nu}]^2}  \dfrac{M_s^{\nu}}{1 + \nu [M_s^{\nu}]^2} }{1 + \Tilde{c}\, \sqrt{ \nu^2 + \left(T_s^{\nu}+T_h^{\nu}\right)^2 } } + d_1 \Delta P^{\nu} ,\\[30pt]
            \begin{aligned}
            \dfrac{\partial M_s^{\nu}}{\partial t} = \dfrac{1}{\delta} \left[  - \dfrac{\Tilde{\alpha} \dfrac{P^{\nu}}{1 + \nu [P^{\nu}]^2}  \dfrac{M_s^{\nu}}{1 + \nu [M_s^{\nu}]^2} }{1 + \Tilde{c}\, \sqrt{ \nu^2 + \left(T_s^{\nu}+T_h^{\nu}\right)^2 } } + \Tilde{\gamma} \dfrac{M_h^{\nu}}{1 + \nu [M_h^{\nu}]^2} \right] + \Gamma &\dfrac{M_h^{\nu}}{1 + \nu [M_h^{\nu}]^2} - \Tilde{\mu} \dfrac{M_s^{\nu}}{1 + \nu [M_s^{\nu}]^2} \\
            &- \Tilde{\beta} \dfrac{M_s^{\nu}}{1 + \nu [M_s^{\nu}]^2} \, \dfrac{T_s^{\nu}}{1 + \nu [T_s^{\nu}]^2} + d_2^{(1)} \Delta M_s^{\nu},
            \end{aligned} \\[30pt]
            \dfrac{\partial M_h^{\nu}}{\partial t} = \dfrac{1}{\delta} \left[  \dfrac{\Tilde{\alpha} \dfrac{P^{\nu}}{1 + \nu [P^{\nu}]^2} \dfrac{M_s^{\nu}}{1 + \nu [M_s^{\nu}]^2} }{1 + \Tilde{c}\, \sqrt{ \nu^2 + \left(T_s^{\nu}+T_h^{\nu}\right)^2 } } - \Tilde{\gamma} \dfrac{M_h^{\nu}}{1 + \nu [M_h^{\nu}]^2} \right] - \Tilde{\mu} \dfrac{M_h^{\nu}}{1 + \nu [M_h^{\nu}]^2} - \Tilde{\beta} \dfrac{M_h^{\nu}}{1 + \nu [M_h^{\nu}]^2} \, \dfrac{T_s^{\nu}}{1 + \nu [T_s^{\nu}]^2} + d_2^{(2)} \Delta M_h^{\nu} ,\\[30pt]
            \begin{aligned}
            \dfrac{\partial T_s^{\nu}}{\partial t} = \dfrac{1}{\varepsilon} \biggl[ - \Tilde{\beta} \left(\dfrac{M_s^{\nu}}{1 + \nu [M_s^{\nu}]^2} + \dfrac{M_h^{\nu}}{1 + \nu [M_h^{\nu}]^2}\right)& \dfrac{T_s^{\nu}}{1 + \nu [T_s^{\nu}]^2} + \Tilde{\eta} \dfrac{T_h^{\nu}}{1 + \nu [T_h^{\nu}]^2} \biggr] \\
            &+ \Tilde{s} \,\dfrac{T_s^{\nu}}{1 + \nu [T_s^{\nu}]^2}\,  \left(1 - \dfrac{\dfrac{T_h^{\nu}}{1 + \nu [T_h^{\nu}]^2} + \dfrac{T_s^{\nu}}{1 + \nu [T_s^{\nu}]^2} }{ \Tilde{m} \sqrt{\nu^2 + (M_s^{\nu} + M_h^{\nu})^2} } \right) + d_3^{(1)} \Delta T_s^{\nu}, 
            \end{aligned} \\[50pt]
            \begin{aligned}
            \dfrac{\partial T_h^{\nu}}{\partial t} = \dfrac{1}{\varepsilon} \biggl[  \Tilde{\beta} \left(\dfrac{M_s^{\nu}}{1 + \nu [M_s^{\nu}]^2} + \dfrac{M_h^{\nu}}{1 + \nu [M_h^{\nu}]^2}\right)& \dfrac{T_s^{\nu}}{1 + \nu [T_s^{\nu}]^2} - \Tilde{\eta} \dfrac{T_h^{\nu}}{1 + \nu [T_h^{\nu}]^2}
            \biggr] \\
            &+ \Tilde{s} \,\dfrac{T_h^{\nu}}{1 + \nu [T_h^{\nu}]^2}\,   \left(1 - \dfrac{\dfrac{T_h^{\nu}}{1 + \nu [T_h^{\nu}]^2} + \dfrac{T_s^{\nu}}{1 + \nu [T_s^{\nu}]^2} }{ \Tilde{m} \sqrt{\nu^2 + (M_s^{\nu} + M_h^{\nu})^2} } \right) + d_3^{(2)} \Delta T_h^{\nu},
            \end{aligned} \\
        \end{cases}
    \end{equation}
with Neumann boundary conditions and initial data \eqref{titi} (that is, independent of $\nu$). 
\medskip


In order to prove that the system admits a global smooth solution, it is sufficient, using for example the results of \cite{LD_milano}, to show that the reaction part of each right-hand side of \eqref{lim_sist_appr} is bounded and globally Lipschitz (in terms of the unknowns). 

\smallskip

By construction, the reaction part of the right-hand side of system (\ref{lim_sist_appr}) is bounded. Let us show that it is globally Lipschitz in terms of the concentrations. 
If we call $f_i = f_i(P^{\nu}, M_s^{\nu}, M_h^{\nu}, T_s^{\nu}, T_h^{\nu})$, with $i = 1, \dots, 5$  the reaction part of the right-hand side of equations \eqref{lim_sist_appr}, we need to prove that, for all $i = 1, \dots 5$ (and some constant $C>0$ which may depend on $\nu$, $\varepsilon$, and the other parameters of the system)

\begin{equation}
\label{GL}
    \begin{aligned}
       & \lvert f_i(P^{\nu}, M_s^{\nu}, M_h^{\nu}, T_s^{\nu}, T_h^{\nu}) - f_i(P^{\nu'}, M_s^{\nu'}, M_h^{\nu'}, T_s^{\nu'}, T_h^{\nu'}) \rvert \leq \\ \leq C ( \lvert P^{\nu} - & P^{\nu'} \rvert + \lvert M_s^{\nu} - M_s^{\nu'} \rvert +\lvert M_h^{\nu} - M_h^{\nu'}\rvert +\lvert T_s^{\nu} - T_s^{\nu'}\rvert + \lvert T_h^{\nu} - T_h^{\nu'} \rvert ) .
    \end{aligned}
\end{equation}


Hence, in order to obtain \eqref{GL}, it is sufficient to show that all the derivatives $\dfrac{\partial f_i}{\partial P^{\nu}}, \dfrac{\partial f_i}{\partial M_s^{\nu}}, \dfrac{\partial f_i}{\partial M_h^{\nu}}, \dfrac{\partial f_i}{\partial T_s^{\nu}}, \dfrac{\partial f_i}{\partial T_h^{\nu}}$, with $i = 1, \dots, 5$ are bounded (by some constant $C>0$ which may depend on $\nu$, $\varepsilon$, and the other parameters of the system).

Let us consider first  the terms $ \dfrac{\partial f_1}{\partial P^{\nu}}$ and $ \dfrac{\partial f_1}{\partial T_s^{\nu}} $. We 
estimate
    \begin{equation}
    \begin{aligned}
        \left| \dfrac{\partial f_1}{\partial P^{\nu}} \right| &= \left| \tilde{r} \dfrac{ 1 - \nu [P^{\nu}]^2}{(1 + \nu [P^{\nu}]^2)^2} - \dfrac{2 \tilde{r}}{K} \dfrac{(1 - \nu [P^{\nu}]^2) P^{\nu}}{(1 + \nu [P^{\nu}]^2)^3} - \dfrac{\tilde{\alpha} M_s^{\nu}}{(1 + \nu [M_s^{\nu}]^2)} \dfrac{1}{ [1 + \tilde{c} \sqrt{\nu^2 + (T_s^{\nu}+ T_h^{\nu})^2 }]} \dfrac{1 - \nu [P^{\nu}]^2}{(1 + \nu [P^{\nu}]^2)^2} \right| \\
        &\leq  \tilde{r}\, \left( 1 + \dfrac{1}{K \sqrt{\nu}} \right) +  \dfrac{\Tilde{\alpha} }{ 2 \sqrt{\nu}} , \\
        \left| \dfrac{\partial f_1}{\partial T_s^{\nu}} \right| &= \left| \dfrac{\tilde{\alpha} P^{\nu}}{(1 + \nu [P^{\nu}]^2)} \dfrac{M_s^{\nu}}{(1 + \nu [M_s^{\nu}]^2)} \dfrac{\Tilde{c}}{ [1 + \tilde{c} \sqrt{\nu^2 + (T_s^{\nu}+ T_h^{\nu})^2 }]^2 }\dfrac{(T_s^{\nu} + T_h^{\nu})}{ \sqrt{\nu^2 + (T_s^{\nu}+ T_h^{\nu})^2 }} \right| \leq \dfrac{\tilde{\alpha}\Tilde{c}}{ 4\nu},
    \end{aligned}
    \end{equation}
since 
$$ \left| \frac{x}{1 + \nu x^2} \right| \le \frac1{2 \sqrt{\nu} }, \qquad \frac{|x|}{\sqrt{\nu^2  + x^2} } \le 1 . $$

All the other terms (partial derivatives of $f_i$ for  $i = 1, \dots, 5$) can be treated similarly.

    Therefore, the approximating  system (\ref{lim_sist_appr}) admits (at least) a global smooth solution (cf. \cite{LD_milano}). Moreover, the structure of the system ensures that the 
unknowns are nonnegative, that is $P^{\nu}, M_s^{\nu}, M_h^{\nu}, T_s^{\nu}, T_h^{\nu} \ge 0$.

\smallskip

 We now establish a few $\nu$-independent a priori estimates for this system.
Using the nonnegativity of the unknowns, we see that 

 \begin{equation}
\label{lim_sist_ineg}
        \begin{cases}
         \dfrac{\partial P^{\nu}}{\partial t} \le \Tilde{r}  P^{\nu} + d_1 \Delta P^{\nu}, \\[10pt]
            \dfrac{\partial M_s^{\nu}}{\partial t} \le \dfrac{\Tilde{\gamma}  }{\delta}  M_h^{\nu}   + \Gamma M_h^{\nu} + d_2^{(1)} \Delta M_s^{\nu} ,\\[10pt]
            \dfrac{\partial M_h^{\nu}}{\partial t} \le \dfrac{\Tilde{\alpha} }{\delta}   P^{\nu} M_s^{\nu} + d_2^{(2)} \Delta M_h^{\nu}, \\[10pt]
            \dfrac{\partial T_s^{\nu}}{\partial t} \le \dfrac{\Tilde{\eta}}{\varepsilon} T_h^{\nu}+ \Tilde{s} \,T_s^{\nu} + d_3^{(1)} \Delta T_s^{\nu} ,\\[10pt]
            \dfrac{\partial T_h^{\nu}}{\partial t}\le  \dfrac{\Tilde{\beta}}{\varepsilon} (M_s^{\nu} + M_h^{\nu}) T_s^{\nu} + \Tilde{s} \,T_h^{\nu} + d_3^{(2)} \Delta T_h^{\nu}. \\
        \end{cases}
    \end{equation}

Multiplying by $ (P^{\nu})^p$ (for some $p\ge 1$) inequality (\ref{lim_sist_ineg})$_{1}$ and integrating by parts, we get
\begin{equation}
    \partial_t \int_{\Omega} \frac{(P^{\nu})^{p+1}}{p+1} \leq  \Tilde{r} \int_{\Omega} ( P^{\nu})^{p+1} - d_1p \int_{\Omega} ( P^{\nu})^{p-1} |\nabla P^{\nu}|^2 ,
\end{equation}
so that for all $t \ge 0$, using Gronwall's lemma, 
\begin{equation}
    \lVert P^{\nu}(t,\cdot) \rVert_{L^{p+1}} \le  \lVert P^{\nu}(0,\cdot) \rVert_{L^{p+1}}\, e^{  \Tilde{r}  \, t} ,
\end{equation}
and letting $p \to \infty$, we end up with the (maximum principle) estimate
\begin{equation} \label{pp}
    \lVert P^{\nu}  \rVert_{L^{\infty} ([0,T] \times \Omega)} \le  e^{\Tilde{r} T}\,  \lVert P_{in}  \rVert_{L^{\infty} (\Omega)} .
\end{equation}

As a consequence, we infer from estimates (\ref{lim_sist_ineg})$_2$ and (\ref{lim_sist_ineg})$_3$ that
 \begin{equation}
\label{lim_sist_ineg2}
        \begin{cases}
            \dfrac{\partial M_s^{\nu}}{\partial t} \le \left(  \dfrac{\Tilde{\gamma}  }{\delta}  + \Gamma \right) \, M_h^{\nu} + d_2^{(1)} \Delta M_s^{\nu} ,\\[10pt]
            \dfrac{\partial M_h^{\nu}}{\partial t} \le \dfrac{\Tilde{\alpha} e^{\Tilde{r} T} \,  \lVert P_{in} \rVert_{L^{\infty} (\Omega)}  }{\delta}   M_s^{\nu} + d_2^{(2)} \Delta M_h^{\nu} .\\
        \end{cases}
    \end{equation}
    
Multiplying by $ (M_s^{\nu})^p$ (for some $p\ge 1$) estimate (\ref{lim_sist_ineg2})$_1$, by $ (M_h^{\nu})^p$ 
 estimate (\ref{lim_sist_ineg2})$_2$, adding up the result, integrating by parts and using \eqref{pp}, we see that
\begin{gather}
    \dfrac{1}{p+1}  \dfrac{d}{dt} \int_{\Omega} \left( [M_s^{\nu}]^{p+1} + [M_h^{\nu}]^{p+1} \right) = \int_{\Omega} \left( [M_s^{\nu}]^{p} \partial_t  M_s^{\nu} + [M_h^{\nu}]^{p} \partial_t M_h^{\nu}  \right) \\
    \leq  \left( \dfrac{\Tilde{\gamma}}{\delta} + \Gamma \right) \int_{\Omega}  M_h^{\nu} [M_s^{\nu}]^{p} +  \dfrac{\Tilde{\alpha}}{\delta} P^{\nu} \int_{\Omega}  M_s^{\nu} [M_h^{\nu}]^{p} - d_2^{(1)} p \int_{\Omega} ( M_s^{\nu})^{p-1} |\nabla M_s^{\nu}|^2  - d_2^{(2)} p \int_{\Omega} ( M_h^{\nu})^{p-1} |\nabla M_h^{\nu}|^2  \\
    \le C_T \int_{\Omega} \left( [M_s^{\nu}]^{p+1} + [M_h^{\nu}]^{p+1} \right),
\end{gather}
where
$C_T>0$ (here and in the rest of the proof) depends on $T$, $\delta$ (and other parameters), but neither on $\nu$ nor on 
$\varepsilon$.

Letting $p \to \infty$, we end up with
\begin{equation} 
\label{mm}
    ||M_s^{\nu}  ||_{L^{\infty} ([0,T] \times \Omega)} +  ||M_h^{\nu}  ||_{L^{\infty} ([0,T] \times \Omega)}  \le  C_T\, ( ||M_{s, in}  ||_{L^{\infty} (\Omega)} +  ||M_{h,in}  ||_{L^{\infty} (\Omega)}).
\end{equation}

As a consequence, we infer from estimates (\ref{lim_sist_ineg})$_4$ and  (\ref{lim_sist_ineg})$_5$ that

\begin{equation}
\label{lim_sist_ineg3}
\begin{cases}
    \dfrac{\partial T_s^{\nu}}{\partial t} \le \dfrac{\Tilde{\eta}}{\varepsilon} T_h^{\nu}+ \Tilde{s} \,T_s^{\nu} + d_3^{(1)} \Delta T_s^{\nu}, \\[10pt]
    \dfrac{\partial T_h^{\nu}}{\partial t}\le  \dfrac{\Tilde{\beta}}{\varepsilon} C_T\, ( ||M_{s,in}  ||_{L^{\infty} (\Omega)} +  ||M_{h,in} ||_{L^{\infty} (\Omega)})  T_s^{\nu} + \Tilde{s} \,T_h^{\nu} + d_3^{(2)} \Delta T_h^{\nu}  .
\end{cases}
\end{equation}

Estimating as previously, we end up with 
\begin{equation} 
\label{tt}
    ||T_s^{\nu}  ||_{L^{\infty} ([0,T] \times \Omega)} +  ||T_h^{\nu}  ||_{L^{\infty} ([0,T] \times \Omega)}  \le  C_{T}\, \left(1 + \frac1{\varepsilon}\right)\,  ( ||T_{s,in}  ||_{L^{\infty} (\Omega)} +  ||T_{h, in}  ||_{L^{\infty} (\Omega)}).
\end{equation}
Since
\begin{equation}
    \dfrac{ \Delta M_s^{\nu} }{(M_s^{\nu})^2}
    = - \Delta \left( \dfrac{1}{M_s^{\nu}} \right) + 2 \dfrac{|\nabla M_s^{\nu}|^2}{(M_s^{\nu})^3},
\end{equation}
recalling \eqref{lim_sist_appr}$_2$, we can next compute 
%
 $$   [\partial_t - d_2^{(1)} \Delta] \bigg( \frac1{M_s^{\nu}} \bigg) = - \frac{ [\partial_t - d_2^{(1)} \Delta] M_s^{\nu}}{(M_s^{\nu})^2} - 2d_2^{(1)} \frac{|\nabla M_s^{\nu}|^2}{(M_s^{\nu})^3} $$
$$    \le \frac1{(M_s^{\nu})^2} \, \bigg[    \dfrac{1}{\delta} \dfrac{\Tilde{\alpha} \frac{P^{\nu}}{1 + \nu [P^{\nu}]^2}  \frac{M_s^{\nu}}{1 + \nu [M_s^{\nu}]^2} }{1 + \Tilde{c}\, \sqrt{ \nu^2 + \left(T_s^{\nu}+T_h^{\nu}\right)^2 } } + \Tilde{\mu} \frac{M_s^{\nu}}{1 + \nu [M_s^{\nu}]^2} + \Tilde{\beta} \frac{M_s^{\nu}}{1 + \nu [M_s^{\nu}]^2} \, \frac{T_s^{\nu}}{1 + \nu [T_s^{\nu}]^2} \bigg] $$
   \begin{equation}
  \le \frac{C_{T}}{M_s^{\nu}}\, \left(1 + \frac1{\varepsilon}\right),
\end{equation}
where the last inequality uses estimates \eqref{pp}, \eqref{mm} and \eqref{tt}.

Then, thanks to the maximum principle, 
\begin{equation}
    \bigg|\bigg| \frac{1}{M_s^{\nu}}  \bigg|\bigg|_{L^{\infty} ([0,T] \times \Omega)}  \le  C_{T}\, \left(1 + \frac1{\varepsilon}\right)\, \bigg|\bigg| \frac1{M_{s,in}}  \bigg|\bigg|_{L^{\infty} (\Omega)} ,
\end{equation}
which can be rewritten (for $t,x \in [0,T] \times \Omega$)
\begin{equation} \label{dd1}
    M_s^{\nu}(t,x) \ge  \frac1{C_{T}\, \left(1 + \frac1{\varepsilon}\right)}>0 .
\end{equation}

A similar computation leads to
\begin{equation} \label{dd2}
    M_h^{\nu}(t,x) \ge \frac1{C_{T}\, \left(1 + \frac1{\varepsilon}\right)}>0 .
\end{equation}

As a consequence of estimates \eqref{pp}, \eqref{mm}, \eqref{tt}, and \eqref{dd1}, \eqref{dd2}, we get from \eqref{lim_sist_appr} the estimates

\begin{equation} \label{hpp}
 || [\partial_t - d_1 \Delta] P^{\nu} ||_{L^{\infty} ([0,T] \times \Omega)}  \le C_T, 
\end{equation}

\begin{equation} 
\label{hmm}
 || [\partial_t - d_2^{(1)} \Delta] M_s^{\nu} ||_{L^{\infty} ([0,T] \times \Omega)}  \leq C_{T}\, \left(1 + \frac1{\varepsilon}\right),  \qquad || [\partial_t - d_2^{(2)} \Delta] M_h^{\nu} ||_{L^{\infty} ([0,T] \times \Omega)}  \le C_{T}\, \left(1 + \frac1{\varepsilon}\right), 
\end{equation}

\begin{equation}  \label{htt}
|| [\partial_t - d_3^{(1)} \Delta] T_s^{\nu} ||_{L^{\infty} ([0,T] \times \Omega)}  \le C_{T}\, \left(1 + \frac1{\varepsilon}\right)^4,  \qquad || [\partial_t - d_3^{(2)} \Delta] T_h^{\nu} ||_{L^{\infty} ([0,T] \times \Omega)}  \le C_{T}\, \left(1 + \frac1{\varepsilon}\right)^4.
\end{equation}
Notice that the bounds \eqref{dd1} and \eqref{dd2} are used in estimate \eqref{htt}.

Using maximal regularity for the heat equation and the bounds \eqref{hpp}, \eqref{hmm}, \eqref{htt}, we see that the sequences $P^{\nu}$, $M_s^{\nu}$, $M_h^{\nu}$, $T_s^{\nu} $,  $T_h^{\nu}$ converge when $\nu \to 0$  (up to a subsequence) a.e. to 
some $P$, $M_s$, $M_h$, $T_s$,  $T_h$ (all those quantities in fact depend on $\varepsilon$), which are all in $L^{\infty} ([0,T] \times \Omega)$, and such that their derivative with respect to time, and their second derivatives with respect to space are in  $L^{p} ([0,T] \times \Omega)$ for all $p\in [1, \infty[$ and $T >0$. They also satisfy estimate 
\begin{equation} \label{newed}
    M_s^{\nu}(t,x) \ge \frac1{C_{T}\, \left(1 + \frac1{\varepsilon}\right)} >0 , \qquad
    M_h^{\nu}(t,x) \ge \frac1{C_{T}\, \left(1 + \frac1{\varepsilon}\right)} >0.
\end{equation}

 Passing to the limit in system \eqref{lim_sist_appr}, with Neumann boundary condition [and initial data (\ref{titi}) which anyway do not depend on $\nu$], we see that  $P$, $M_s$, $M_h$, $T_s$,  $T_h$ (which in fact depend on $\varepsilon$),
satisfy in the strong sense system (\ref{lim_sist_rig}) -- (\ref{titi}). Indeed all terms (in the system) without derivatives converge a.e. (towards the right quantity), while the terms with derivatives (including the trace of the gradients appearing in the  Neumann boundary condition) converge weakly in  $L^{p} ([0,T] \times \Omega)$ for all $p\in [1, \infty[$ and $T>0$ (once again, to the right quantity). 

Bootstrapping, one can show that  $P$, $M_s$, $M_h$, $T_s$,  $T_h$ are in fact smooth (remember estimate \eqref{newed}). Uniqueness also holds when one restricts oneself to the class of smooth solutions such that $P$, $T_s$,  $T_h$ are nonnegative, and such that $M_s$, $M_h$ are strictly positive.
\medskip

{\bf{Second step}}: $\varepsilon$-independent a priori estimates
\medskip

We now reintroduce the parameter $\varepsilon$ in the notations (since we will let $\varepsilon$ go  to $0$), and consider the unique smooth solution $P^\varepsilon$, $M_s^\varepsilon$, $M_h^\varepsilon$, $T_s^\varepsilon$,  $T_h^\varepsilon$ of   system (\ref{lim_sist_rig}) -- (\ref{titi}) obtained in the first step.   We recall that $P^\varepsilon \ge 0$, $T_s^\varepsilon \ge 0$,  $T_h^\varepsilon \ge 0$, and  $M_s^\varepsilon >0$, $M_h^\varepsilon >0$.

We first observe that the constants are not depending on $\varepsilon$ in estimates \eqref{pp} and \eqref{mm}, so
that
$$ ||P^{\varepsilon}  ||_{L^{\infty} ([0,T] \times \Omega)} \le  C_T\,  ||P_{in} ||_{L^{\infty} (\Omega)} , $$
$$ ||M_s^{\varepsilon}  ||_{L^{\infty} ([0,T] \times \Omega)} +  || M_h^{\varepsilon} ||_{L^{\infty} ([0,T] \times \Omega)}  \le  C_T\, ( ||M_{s,in} ||_{L^{\infty} (\Omega)} +  || M_{h,in}  ||_{L^{\infty} (\Omega)}).$$
However, it is not possible to obtain in a similar way an $\varepsilon$-independent $L^{\infty}$ estimate for $T_s^{\varepsilon}$ and $T_h^{\varepsilon}$, because of the factor $\frac1{\varepsilon}$  in the right-hand side of the last two equations of \eqref{lim_sist_rig}.
\medskip

In order to obtain an $\varepsilon$-independent  estimate for $T_h^{\varepsilon}$ and $T_s^{\varepsilon}$ (in a weaker norm),  we use the (refined version of the) so called duality lemma (see \cite{CDF} or \cite{BDF}), and the estimate
\begin{equation}
    \partial_t (T_s^{\varepsilon} + T_h^{\varepsilon}) - \Delta ( d_3^{(1)} T_s^{\varepsilon} + d_3^{(2)} T_h^{\varepsilon} ) \le  \Tilde{s} \,( T_s^{\varepsilon} + T_h^{\varepsilon} ),
\end{equation}
to show that for some $\zeta>0$, 
\begin{equation} \label{ze}
    \lVert T_s^{\varepsilon}  \rVert_{L^{2+\zeta} ([0,T] \times \Omega)} + \lVert T_h^{\varepsilon}  \rVert_{L^{2+\zeta} ([0,T] \times \Omega)} \le C_T .
\end{equation}
Using estimate \eqref{ze} and observing that 
\begin{equation}
 | [\partial_t - d_1 \Delta]  P^{\varepsilon}|  \le C_T , \qquad 
    | [\partial_t - d_2^{(1)} \Delta]  M_s^{\varepsilon}|  \le C_T\,(1 + T_s^{\varepsilon}) , \qquad  | [\partial_t - d_2^{(2)} \Delta]  M_h^{\varepsilon}|  \le C_T\,(1 + T_s^{\varepsilon}) ,
\end{equation}
 the maximal regularity property of the heat equation ensures that
 (for any $q \in [1,\infty[$ and $T>0$)
\begin{equation}
    || \partial_t  P^{\varepsilon} ||_{L^{q} ([0,T] \times \Omega)} + \sum_{i,j = 1,..,N} || \partial_{x_i x_j}  P^{\varepsilon} ||_{L^{q} ([0,T] \times \Omega)}  \le C_T ,
\end{equation}
and
\begin{equation}
    || \partial_t  M_s^{\varepsilon} ||_{L^{2+\zeta} ([0,T] \times \Omega)} + \sum_{i,j = 1,..,N} || \partial_{x_i x_j}  M_s^{\varepsilon} ||_{L^{2+\zeta} ([0,T] \times \Omega)}  \le C_T ,
\end{equation}
%
\begin{equation}
     || \partial_t  M_h^{\varepsilon} ||_{L^{2+\zeta} ([0,T] \times \Omega)} + \sum_{i,j = 1,..,N} || \partial_{x_i x_j}  M_h^{\varepsilon} ||_{L^{2+\zeta} ([0,T] \times \Omega)}  \le C_T .
\end{equation}
\medskip

We conclude that up to extraction of a subsequence, $P^{\varepsilon}$, $M_s^{\varepsilon}$ and $M_h^{\varepsilon}$ converge a.e. towards $P$, $M_s$ and $M_h$, where $P$, $M_s$ and $M_h$ lie in $L^{\infty} ([0,T] \times \Omega)$ for all $T>0$.
We also know thanks to \eqref{ze} 
that up to a subsequence, $T_s^{\varepsilon}$ and $T_h^{\varepsilon}$ converge weakly in $L^{2+\zeta} ([0,T] \times \Omega)$
towards  $T_s$ and $T_h$, where  $T_s$ and $T_h$ lie in $L^{2+\zeta} ([0,T] \times \Omega)$ (for all $T>0$ and some $\zeta>0$). 
\medskip

At this point, we observe that
 $$   [\partial_t - d_2^{(1)} \Delta] \ln  M_s^{\varepsilon}  = \frac{ [\partial_t - d_2^{(1)} \Delta]  M_s^{\varepsilon} }{M_s^{\varepsilon} }  + d_2^{(1)} \frac{|\nabla M_s^{\varepsilon}|^2}{ (M_s^{\varepsilon})^2} $$
\begin{equation}
    \geq - \frac{ \Tilde{\alpha} }{\delta}  \, \frac{P^{\varepsilon}}{1 + \Tilde{c}\, \left(T_s^{\varepsilon}+T_h^{\varepsilon}\right)}  - \Tilde{\mu} - \Tilde{\beta} \, T_s^{\varepsilon} \geq - \Tilde{\beta} \,  T_s^{\varepsilon} - C_T . 
\end{equation}
In the same way,  
 \begin{equation}
   [\partial_t - d_2^{(1)} \Delta] \ln  M_h^{\varepsilon} 
    \geq - \frac{ \Tilde{\gamma} }{\delta}  - \Tilde{\mu} - \Tilde{\beta} \, T_s^{\varepsilon} \geq - \Tilde{\beta} \,  T_s^{\varepsilon} - C_T . 
\end{equation}
%
\medskip

The semigroup property of the heat equation (that is, when $[\partial_t - d \Delta] u \in L^{q} ([0,T] \times \Omega)$ for some $q>0$, then 
$u \in L^{r} ([0,T] \times \Omega)$ with $\dfrac1r > \dfrac1q + \dfrac{N}{N+2} - 1$; or $r=\infty$ when $\dfrac1q + \dfrac{N}{N+2} < 1$)  implies that $ \ln M_s^{\varepsilon} \ge - C_T$, so that 
(for $t,x \in [0,T] \times \Omega$)
\begin{equation}
    M_s^{\varepsilon}(t,x) \ge e^{- C_T} >0 , \qquad 
    M_h^{\varepsilon}(t,x) \ge e^{- C_T} >0 .
\end{equation}
Indeed, when $q= 2 + \zeta > 2$ and $N=1$ or $2$, then $\frac1q + \frac{N}{N+2} < 1$, so that 
$$ \ln M_s^\varepsilon \ge - C_T, \qquad \ln M_h^\varepsilon \ge - C_T. $$

\medskip

We  compute then, for any $\alpha \in ]0,1[$, denoting $M^\varepsilon :=  M_s^\varepsilon + M_h^\varepsilon$ and $T^\varepsilon_* :=  T_s^\varepsilon + T_h^\varepsilon$,

$$ \frac{d}{dt} \bigg\{ \int_{\Omega} \bigg[ \tilde{\eta}^{\alpha} \, \frac{(T_h^{\varepsilon})^{1+\alpha}}{1+\alpha}  + \tilde{\beta}^{\alpha}
\, (M^{\varepsilon})^{\alpha}\, \frac{(T_s^{\varepsilon})^{1+\alpha}}{1+\alpha} \bigg] \, \bigg\}$$
$$ = \int_{\Omega} \bigg[ \tilde{\eta}^{\alpha}\, (T_h^{\varepsilon})^{\alpha}\, \partial_t T_h^{\varepsilon} +  \tilde{\beta}^{\alpha} \,  (M^{\varepsilon})^{\alpha}\,  (T_s^{\varepsilon})^{{\alpha}} \, \partial_t T_s^{\varepsilon} + \frac{\alpha}{1+\alpha} \, 
\tilde{\beta}^{\alpha}\,( T_s^{\varepsilon})^{1+\alpha}\, (M^{\varepsilon})^{\alpha - 1}\,
\partial_t M^{\varepsilon} \,\bigg] $$

$$ = \int_{\Omega} \bigg[ \tilde{\eta}^{\alpha}\, (T_h^{\varepsilon})^{\alpha}\, d_3^{(2)}\, \Delta T_h^{\varepsilon} + \tilde{s}\, \tilde{\eta}^{\alpha}\, (T_h^{\varepsilon})^{1+\alpha}\, \left(1 - \frac{T^{\varepsilon}_*}{ \tilde{m} \, M^{\varepsilon} } \right) $$
$$ - \frac1{\varepsilon}\,  \tilde{\eta}^{\alpha}\, (T_h^{\varepsilon})^{\alpha}\, (\tilde{\eta}\, T_h^{\varepsilon} -  \tilde{\beta}  \, M^{\varepsilon}  \, T_s^{\varepsilon} )
+  \tilde{\beta}^{\alpha}\,  d_3^{(1)}\,  (M^\varepsilon)^{\alpha} \,  (T_s^{\varepsilon})^{{\alpha}} \Delta T_s^{\varepsilon}
 + \tilde{s} \,{\tilde{\beta}^{\alpha}}\, (M^\varepsilon)^{\alpha} \, (T_s^{\varepsilon})^{1+\alpha}\, \left(1 - \frac{T^{\varepsilon}_*}{ \tilde{m} \, M^{\varepsilon}  } \right)$$
$$ +  \frac1{\varepsilon}\,  \tilde{\beta}^{\alpha}  (M^{\varepsilon})^{\alpha}\,  (T_s^{\varepsilon})^{{\alpha}}
  \, ( \tilde{\eta}\, T_h^{\varepsilon} -  \tilde{\beta}  \, M^{\varepsilon}  \, T_s^{\varepsilon} ) 
+ \frac{\alpha}{1+\alpha}\, \tilde{\beta}^{\alpha}\, (T_s^{\varepsilon})^{1+\alpha}\, (M^{\varepsilon})^{\alpha - 1} \,
 \partial_t M^{\varepsilon}\, \bigg] $$
 
$$ = - \frac1{\varepsilon} \int_{\Omega} (\tilde{\eta}\, T_h^{\varepsilon} -  \tilde{\beta}  \, M^{\varepsilon}  \, T_s^{\varepsilon} ) \,
((\tilde{\eta}\, T_h^{\varepsilon})^{\alpha} -  (\tilde{\beta}  \, M^{\varepsilon}  \, T_s^{\varepsilon} )^{\alpha} ) $$
$$ + \tilde{s} \, \int_{\Omega}  [  \tilde{\eta}^{\alpha}\, (T_h^{\varepsilon})^{1+\alpha} +   {\tilde{\beta}^{\alpha}}\, (M^{\varepsilon})^{\alpha} \, (T_s^{\varepsilon})^{1+\alpha}]\, \left(1 - \frac{T_{*}^{\varepsilon}}{ \tilde{m} \, M^{\varepsilon}  } \right) $$
$$ - \tilde{\eta}^{\alpha}\, d_3^{(2)}\,  \alpha \int_{\Omega}  (T_h^{\varepsilon})^{\alpha -1} |\nabla T_h^{\varepsilon}|^2  - \tilde{\beta}^{\alpha}\,  d_3^{(1)}\, \alpha \int_{\Omega}  (M^{\varepsilon})^{\alpha}\,  (T_s^{\varepsilon})^{\alpha -1} |\nabla T_s^{\varepsilon}|^2  $$
$$ + \frac{\alpha}{1+\alpha}\, \tilde{\beta}^{\alpha}\, \int_{\Omega} (T_s^{\varepsilon})^{1+\alpha}\,  \bigg\{ (M^{\varepsilon})^{\alpha - 1} \,
 \partial_t M^{\varepsilon} + d_3^{(1)}\,  \nabla\cdot (  \, (M^{\varepsilon})^{\alpha - 1} \nabla M^{\varepsilon}  ) \bigg\} .$$

Integrating with respect to time between $0$ and $T$ leads to the following estimate:
$$ \int_{\Omega} \bigg( \tilde{\eta}^{\alpha} \, \frac{(T_h^{\varepsilon})^{1+\alpha}}{1+\alpha}  + \tilde{\beta}^{\alpha}
\, (M^{\varepsilon})^{\alpha}\, \frac{(T_s^{\varepsilon})^{1+\alpha}}{1+\alpha} \bigg) \, (T)  $$
$$ + \frac1{\varepsilon} \int_0^T \int_{\Omega} (\tilde{\eta}\, T_h^{\varepsilon} -  \tilde{\beta}  \, M^{\varepsilon}  \, T_s^{\varepsilon} ) \,
((\tilde{\eta}\, T_h^{\varepsilon})^{\alpha} -  (\tilde{\beta}  \, M^{\varepsilon}  \, T_s^{\varepsilon} )^{\alpha} ) $$
$$ +\tilde{\eta}^{\alpha}\, d_3^{(2)}\,  \alpha \int_0^T \int_{\Omega}   (T_h^{\varepsilon})^{\alpha -1} |\nabla T_h^{\varepsilon}|^2  + \tilde{\beta}^{\alpha}\,  d_3^{(1)}\, \alpha \int_0^T\int_{\Omega}  (M^{\varepsilon})^{\alpha}\,  (T_s^{\varepsilon})^{\alpha -1} |\nabla T_s^{\varepsilon}|^2  $$
$$ \le \tilde{s} \, \int_0^T\int_{\Omega}   [  \tilde{\eta}^{\alpha}\, (T_h^{\varepsilon})^{1+\alpha} +   {\tilde{\beta}^{\alpha}}\, (M^{\varepsilon})^{\alpha} \, (T_s^{\varepsilon})^{1+\alpha}] $$
$$ + \frac{\alpha}{1+\alpha}\, \tilde{\beta}^{\alpha}\,\int_0^T \int_{\Omega} (T_s^{\varepsilon})^{1+\alpha}\, (M^{\varepsilon})^{\alpha - 1} \,
 \bigg[ |\partial_t M^{\varepsilon}| + d_3^{(1)}\, \Big(|\Delta M^{\varepsilon}| + (\alpha - 1)  \, \frac{|\nabla  M^{\varepsilon}|^2}{M^{\varepsilon}} \Big) \bigg] $$
$$ + \int_{\Omega} \bigg( \tilde{\eta}^{\alpha} \, \frac{(T_{h,in})^{1+\alpha}}{1+\alpha}  + \tilde{\beta}^{\alpha}
\, (M_{s,in} + M_{h,in})^{\alpha}\, \frac{(T_{s,in})^{1+\alpha}}{1+\alpha} \bigg) . $$

The first term in the right-hand side of the estimate above is bounded (uniformly in $\varepsilon$) since
$ M^{\varepsilon}$ is bounded (uniformly in $\varepsilon$) in
$L^{\infty}([0,T]\times\Omega)$ for all $T>0$, and since $T_h^{\varepsilon},  T_s^{\varepsilon}$ are bounded in $L^{2+\zeta}([0,T] \times\Omega)$ for all $T>0$, and some $\zeta>0$. 
\par
The last term of this  right-hand side is also finite thanks to the assumptions made on the initial data.
\par 
Remembering finally that $\partial_t M^{\varepsilon}$ and  $\partial_{x_ix_j} M^{\varepsilon}$ are bounded in $L^{2+\zeta}([0,T] \times\Omega)$ (for some $\zeta>0$, and all $T>0$, $i,j \in \{1,..,N\}$), and that $M^{\varepsilon} \ge e^{- C_T }>0$, we see that when $\alpha>0$ is small enough, the second term is also bounded (uniformly in $\varepsilon$). Note that $\alpha - 1 <0$  so that $(\alpha - 1)\, \frac{|\nabla  M^{\varepsilon}|^2}{M^{\varepsilon}} \le 0$.
\medskip

Still assuming that $\alpha>0$ is small enough, we get therefore the following bounds: 
\begin{equation}\label{coer}
 \int_0^T \int_{\Omega} (\tilde{\eta}\, T_h^{\varepsilon} -  \tilde{\beta}  \, M^{\varepsilon}  \, T_s^{\varepsilon} ) \,
((\tilde{\eta}\, T_h^{\varepsilon})^{\alpha} -  (\tilde{\beta}  \, M^{\varepsilon}  \, T_s^{\varepsilon} )^{\alpha} ) 
 \le C_T\, \varepsilon , 
\end{equation}
and 
 $$ 
 \int_0^T \int_{\Omega}   (T_h^{\varepsilon})^{\alpha -1} |\nabla T_h^{\varepsilon}|^2  \le C_T, \qquad   \int_0^T\int_{\Omega}  M^{\alpha}_{\varepsilon}\,  (T_s^{\varepsilon})^{\alpha -1} |\nabla T_s^{\varepsilon}|^2 
\le C_T. $$ 
\par 
Then, using Cauchy-Schwartz inequality, we get the estimate
\begin{equation}\label{w11}
\bigg(\int_0^T \int_{\Omega}  |\nabla T_h^{\varepsilon}| \,  \bigg)^2 
\le  C_T \int_0^T \int_{\Omega}   (T_h^{\varepsilon})^{1 - \alpha} \le C_T,
\end{equation}
 and in the same way, 
\begin{equation}\label{w11bis}
\bigg(\int_0^T \int_{\Omega} |\nabla T_s^{\varepsilon} | \,  \bigg)^2 
\le  \int_0^T \int_{\Omega}    M^{-\alpha}_{\varepsilon}\,  (T_s^{\varepsilon})^{1 - \alpha}    \le C_T.
\end{equation}
\medskip 

Moreover,  
$\partial_t T^{\varepsilon}_*$ is bounded  in $L^2([0,T] ; H^{-2}(\Omega)) +  L^{1 + \zeta/2}([0,T] \times\Omega)$ since $M^{\varepsilon} \geq e^{- C_T} >0$,
so that thanks to estimates (\ref{w11}) and (\ref{w11bis}) and  Aubin-Lions lemma,
$T^{\varepsilon}_*$ converges (up to extraction of a subsequence) a.e. to $T_*$ on $[0,T] \times\Omega$.
\medskip

Then, using the elementary inequality (for $\alpha\in ]0,1[$, and a constant $C_{\alpha}$  which may depend on $\alpha$)
$$  (x^{(1+\alpha)/2} - y^{(1+\alpha)/2})^2 \le  C_{\alpha} \,  (x -y)\, (x^{\alpha} - y^{\alpha}), $$
 estimate (\ref{coer}) leads to the bound: 
$$ \int_0^T \int_{\Omega} \bigg| \bigg[ \tilde{\eta}\, T_h^{\varepsilon} \bigg]^{\frac{1+\alpha}2} - \bigg[   \tilde{\beta}  \, M^{\varepsilon}  \, T_s^{\varepsilon}  \bigg]^{\frac{1+\alpha}2} \bigg|^2\, 
 dx dt  \le C_{\alpha}\, C_T \, \varepsilon . $$
We now introduce a second elementary inequality (which holds for $\alpha>0$ small enough, and a constant $C'_{\alpha}$  which may depend on $\alpha$)
$$|x - y| \le C'_{\alpha}\, |x^{(1+\alpha)/2} - y^{(1+\alpha)/2}|\, (x^{(1-\alpha)/2} + y^{(1-\alpha)/2}). $$
 Then
$$ \int_0^T \int_{\Omega} | \tilde{\eta}\, T_h^{\varepsilon} -  \tilde{\beta}  \, M^{\varepsilon}  \, T_s^{\varepsilon} | $$ 
 $$ \le C'_{\alpha} \int_0^T \int_{\Omega} \bigg|   \bigg[ \tilde{\eta}\, T_h^{\varepsilon} \bigg]^{\frac{1+\alpha}2} - \bigg[   \tilde{\beta}  \, M^{\varepsilon}  \, T_s^{\varepsilon}  \bigg]^{\frac{1+\alpha}2}   \bigg|\, \bigg(    \bigg[ \tilde{\eta}\, T_h^{\varepsilon} \bigg]^{\frac{1+\alpha}2} + \bigg[   \tilde{\beta}  \, M^{\varepsilon}  \, T_s^{\varepsilon}  \bigg]^{\frac{1+\alpha}2}    \bigg)
   $$
 $$ \leq  C'_{\alpha} \,  (C_{\alpha} {\varepsilon})^{1/2} \, C_T . $$
 As a consequence, 
$\tilde{\eta}\, T_h^{\varepsilon} -  \tilde{\beta}  \, M^{\varepsilon}  \, T_s^{\varepsilon}  $ converges (up to extraction of a subsequence)
 strongly in $L^1([0,T] \times\Omega)$ and a.e. to $0$.
 Recalling that $M^\varepsilon$ converges a.e. towards $M$,  and that $ T_h^{\varepsilon},  \, T_s^{\varepsilon}  $
converge weakly in $L^{2+\zeta}([0,T] \times\Omega)$ towards $T_h, T_s$ respectively, we see that
$\tilde{\eta}\, T_h^{\varepsilon} -  \tilde{\beta}  \, M^{\varepsilon}  \, T_s^{\varepsilon}  $ converges
weakly in $L^1([0,T] \times\Omega)$ towards
$ \tilde{\eta}\, T_h -  \tilde{\beta}  \, M \, T_s$, so that  $ \tilde{\eta}\, T_h =  \tilde{\beta}  \, M \, T_s$. 
\medskip

Remembering moreover that $T_{*}^{\varepsilon}$ converges a.e. to $T_*$, we see that 
$ \tilde{\eta}\, T_h^{\varepsilon}+ \tilde{\eta}\, T_s^{\varepsilon}$ converges a.e. to $\tilde{\eta}\, T_*$,  and (since $\tilde{\eta}\, T_h^{\varepsilon} -  \tilde{\beta}  \, M^{\varepsilon}  \, T_s^{\varepsilon}  $ converges to $0$ a.e.)
 $ (\tilde{\beta}  \, M^{\varepsilon} + \tilde{\eta}) \, T_s^{\varepsilon}  $  converges to $ \tilde{\eta}\, T_* $,
so that $T_s^{\varepsilon}$ converges to $\frac{ \tilde{\eta} T_*}{ \tilde{\eta} + \tilde{\beta}  \, M }$ a.e, and 
$T_h^{\varepsilon}$ converges to $\frac{ \tilde{\beta}  \, M \, T_*}{ \tilde{\eta} + \tilde{\beta}  \, M }$ a.e.

{\bf{Third step}}: Passage to the limit
\medskip

Taking up the first equation of system \eqref{lim_sist_rig},
the very-weak formulation of the first equation 
is given by: \\
$\forall \varphi \in C_c^2([0, \infty) \times \Bar{\Omega})$ such that $\nabla_x \varphi \cdot \textbf{n} |_{\partial \Omega} = 0,$    
    \begin{equation}        
        - \underbrace{\int_0^{\infty} \int_{\Omega} P^{\varepsilon} \partial_t \varphi}_{\fbox{1}} - \int_{\Omega} \varphi(0, \cdot) P_{in} - \underbrace{d_1 \int_0^{\infty} \int_{\Omega} P^{\varepsilon} \Delta \varphi}_{\fbox{2}} = \underbrace{\int_0^{\infty} \int_{\Omega}  \Tilde{r} \left(1 - \dfrac{P^{\varepsilon}}{K} \right) P^{\varepsilon} \varphi}_{\fbox{3}} -  \underbrace{\int_0^{\infty} \int_{\Omega} \dfrac{\Tilde{\alpha} P^{\varepsilon} M_s^{\varepsilon}}{1 + \Tilde{c}\, \left(T_s^{\varepsilon} + T_h^{\varepsilon} \right)} \varphi}_{\fbox{4}}. 
    \end{equation}
    We can easily pass to the limit $\varepsilon \to 0$ in the terms ${\fbox{1}}, \, {\fbox{2}}, \, {\fbox{3}} \, {\fbox{4}}$ using the Lebesgue's dominated convergence theorem. 
    Indeed, since $P^{\varepsilon}$, $M_s^{\varepsilon}$,  $T_s^{\varepsilon}$, $T_h^{\varepsilon}$ converge a.e. towards $P$, $M_s$,  $T_s$, $T_h$ and previous estimates (see second step), we know that 
    $$\lVert P^{\varepsilon}\partial_t \varphi\rVert_{L^{\infty}([0,T] \times \Omega)}  +  \lVert P^{\varepsilon}\Delta \varphi\rVert_{L^{\infty}([0,T] \times \Omega)} \leq C_T \, \Big(  \lVert \partial_t \varphi \rVert_{L^{\infty}([0,T] \times \Omega)} +
  \lVert \Delta \varphi \rVert_{L^{\infty}([0,T] \times \Omega)} \Big)\, 
,$$
    and
     $$\bigg|\bigg| \dfrac{\Tilde{\alpha} P^{\varepsilon} M_s^{\varepsilon}}{1 + \Tilde{c}\, \left(T_s^{\varepsilon} + T_h^{\varepsilon} \right)} \varphi \bigg|\bigg|_{L^{\infty}([0,T] \times \Omega)}  \leq  C_T \lVert \varphi \rVert_{L^{\infty}([0,T] \times \Omega)} 
, $$
    so that we obtain \eqref{1}.
    
\vspace{0.3cm}

    In the same way, the very-weak formulation of the second equation of system \eqref{lim_sist_rig} is given by: \\
    $\forall \psi \in C_c^2([0, \infty) \times \Bar{\Omega})$ such that $\nabla_x \psi \cdot \textbf{n} |_{\partial \Omega} = 0,$ 
    \begin{equation}
        \begin{aligned}
            - \int_0^{\infty}& \int_{\Omega} M_s^{\varepsilon} \partial_t \psi - \int_{\Omega} \psi(0, \cdot) M_{s,in} - \int_0^{\infty} \int_{\Omega} d_2^{(1)} M_s^{\varepsilon} \Delta \psi  = \\
            &= \int_0^{\infty} \int_{\Omega}  \left[ \Gamma M_h^{\varepsilon} - \Tilde{\mu} M_s^{\varepsilon} \right] \psi - \underbrace{\int_0^{\infty} \int_{\Omega} \Tilde{\beta} M_s^{\varepsilon} T_s^{\varepsilon} \psi}_{\fbox{5}} + \int_0^{\infty} \int_{\Omega} \dfrac{1}{\delta} \left[  - \dfrac{\Tilde{\alpha} P^{\varepsilon} M_s^{\varepsilon}}{1 + \Tilde{c}\, \left(T_s^{\varepsilon}+T_h^{\varepsilon} \right)} + \Tilde{\gamma} M_h^{\varepsilon} \right] \psi  . \\[10pt]
        \end{aligned}
    \end{equation}
    We can easily pass to the limit $\varepsilon \to 0$ in almost all the terms using  Lebesgue's dominated convergence theorem as in the equation for $P^\varepsilon$. 
The only slightly different term is ${\fbox{5}}$. Since $T_s^{\varepsilon}$ converges strongly in $L^{2}([0,T]\times \Omega)$ 	and $M_s^{\varepsilon}$ converges to $M_s$ strongly in $L^{2}([0,T]\times \Omega)$,
then $M_s^{\varepsilon} T_s^{\varepsilon}$ converge weakly to $M_s T_s$ in $L^1$. This enables to pass to the limit in ${\fbox{5}}$, and leads to eq. \eqref{2}.
\medskip

Eq. \eqref{3} can be obtained in the exact same way.
\medskip

     Finally, the very-weak formulation of the sum of the fourth and fifth equations of system \eqref{lim_sist_rig} is given by: \\
    $\forall \xi \in C_c^2([0, \infty) \times \Bar{\Omega})$ such that $\nabla_x \xi \cdot \textbf{n} |_{\partial \Omega} = 0,$ 
    \begin{equation}
        \begin{aligned}
            - \int_0^{\infty} \int_{\Omega} (T_s^{\varepsilon} + T_h^{\varepsilon}) \partial_t \xi &- \int_{\Omega} \xi(0, \cdot) (T_{s, in} + T_{h,in}) - \int_0^{\infty} \int_{\Omega} \left( d_3^{(1)}  T_s^{\varepsilon} +  d_3^{(2)}  T_h^{\varepsilon} \right) \Delta \xi = \\
            &= \int_0^{\infty} \int_{\Omega} \Tilde{s} \,(T_s^{\varepsilon} + T_h^{\varepsilon}) \, \xi - 
            \underbrace{\int_0^{\infty} \int_{\Omega} \Tilde{s} \,(T_s^{\varepsilon} + T_h^{\varepsilon}) \left( \dfrac{T_s^{\varepsilon} + T_h^{\varepsilon} }{ \Tilde{m} (M_h^{\varepsilon} + M_s^{\varepsilon})} \right)  \xi}_{\fbox{6}}.
        \end{aligned}
    \end{equation}
    For all terms, except for $\fbox{6}$, one can proceed as previously. Concerning term $\fbox{6}$, we know that 
$\dfrac{(T_s^{\varepsilon} + T_h^{\varepsilon})^2}{M_h^{\varepsilon} + M_s^{\varepsilon}}$  converges a.e. towards 
    $\dfrac{(T_s + T_h)^2}{M_h + M_s}$ (remember that $M_h^{\varepsilon} + M_s^{\varepsilon} \ge e^{- C_T}>0$). 
Moreover, $\dfrac{(T_s^{\varepsilon} + T_h^{\varepsilon})^2}{M_h^{\varepsilon} + M_s^{\varepsilon}}$ is bounded in
$L^{1+ \zeta/2}([0,T] \times \Omega)$ for some $\zeta >0$, and all $T>0$. 
As a consequence, 
$$ \int_0^{\infty} \int_{\Omega} \Tilde{s} \,(T_s^{\varepsilon} + T_h^{\varepsilon}) \left( \dfrac{T_s^{\varepsilon} + T_h^{\varepsilon} }{ \Tilde{m} (M_h^{\varepsilon} + M_s^{\varepsilon})} \right)  \xi
    \to_{\varepsilon \to 0} 
\int_0^{\infty} \int_{\Omega} \Tilde{s} \,(T_s + T_h) \left( \dfrac{T_s + T_h}{ \Tilde{m} (M_h + M_s)} \right)  \xi .$$
All in all, we end up with eq. \eqref{tteq}. We recall that we ended up at the end of step 2 with the constraint \eqref{exf}, so that $P$, $M_s$, $M_h$, $T_s$,  $T_h$ solves as announced the very weak formulation of \eqref{senza eps} -- \eqref{titi2} (or equivalently \eqref{simpform} -- \eqref{titi3}), that is  \eqref{exf} -- \eqref{tteq}.

\end{proof}

\bibliographystyle{amsplain}
\bibliography{references}

\providecommand{\bysame}{\leavevmode\hbox to3em{\hrulefill}\thinspace}
\providecommand{\MR}{\relax\ifhmode\unskip\space\fi MR }
\providecommand{\MRhref}[2]{%
  \href{http://www.ams.org/mathscinet-getitem?mr=#1}{#2}
}
\providecommand{\href}[2]{#2}
\begin{thebibliography}{10}

\bibitem{AG2000}
P.~A. Abrams and L.~R. Ginzburg, \emph{The nature of predation: prey dependent,
  ratio dependent or neither?}, Trends in Ecology \& Evolution \textbf{15}
  (2000), no.~8, 337--341.

\bibitem{Beddington1975}
J.~R. Beddington, \emph{Mutual interference between parasites or predators and
  its effect on searching efficiency}, The Journal of Animal Ecology (1975),
  331--340.

\bibitem{BDF}
M.~Breden, L.~Desvillettes, and K.~Fellner, \emph{Smoothness of moments of the
  solutions of discrete coagulation equations with diffusion}, Monatshefte
  f{\"u}r Mathematik \textbf{183} (2017), 437--463.

\bibitem{CdLFLM2023}
F.~Capone, R.~De~Luca, L.~Fiorentino, V.~Luongo, and G.~Massa, \emph{Turing
  instability for a {L}eslie--{G}ower model}, Ricerche di Matematica (2023),
  1--18.

\bibitem{CDF}
J.A. Cañizo, L.~Desvillettes, and K.~Fellner, \emph{Improved duality estimates
  and applications to reaction-diffusion equations}, Communications in Partial
  Differential Equations 39 (6) (2014), 1185--1204.

\bibitem{CC}
F.~Chen and L.~Chen, \emph{Global stability of a leslie–gower predator–prey
  model with feedback controls}, Applied Mathematics Letters \textbf{22}
  (2009), no.~9, 1330--1334.

\bibitem{CCX}
F.~Chen, L.~Chen, and X.~Xie, \emph{On a {L}eslie–{G}ower predator–prey
  model incorporating a prey refuge}, Nonlinear Analysis: Real World
  Applications \textbf{10} (2009), no.~5, 2905--2908.

\bibitem{CDesS2018}
F.~Conforto, L.~Desvillettes, and C.~Soresina, \emph{About reaction--diffusion
  systems involving the {H}olling-type {II} and the {B}eddington
  --{D}e{A}ngelis functional responses for predator--prey models}, Nonlinear
  Differential Equations and Applications NoDEA \textbf{25} (2018), no.~3, 24.

\bibitem{DeAngelis1975}
D.~L. DeAngelis, R.~A. Goldstein, and R.~V. O'Neill, \emph{A model for tropic
  interaction}, Ecology \textbf{56} (1975), no.~4, 881--892.

\bibitem{LD_milano}
L.~Desvillettes, \emph{About entropy methods for reaction-diffusion equations},
  Rivista di Matematica dell'Università di Parma \textbf{7} (2007), 81--123.

\bibitem{LD_soresina}
L.~Desvillettes and C.~Soresina, \emph{Non triangular cross-diffusion systems
  with predator-prey reaction terms}, Ricerche di Matematica (2019), 295--314.

\bibitem{FDV2020}
M.~Falconi, Y.~Vera-Dami\'an, and C.~Vidal, \emph{Predator interference in a
  {L}eslie--{G}ower intraguild predation model}, Nonlinear Analysis: Real World
  Applications \textbf{51} (2020), 102974.

\bibitem{GeGyll2012}
S.~Geritz and M.~Gyllenberg, \emph{A mechanistic derivation of the
  {D}e{A}ngelis--{B}eddington functional response}, Journal of theoretical
  biology \textbf{314} (2012), 106--108.

\bibitem{Holling1966}
C.~S. Holling, \emph{The functional response of invertebrate predators to prey
  density}, The Memoirs of the Entomological Society of Canada \textbf{98}
  (1966), no.~S48, 5--86.

\bibitem{HuiBoe1997}
G.~Huisman and R.~J. De~Boer, \emph{A formal derivation of the
  “{B}eddington” functional response}, Journal of theoretical biology
  \textbf{185} (1997), no.~3, 389--400.

\bibitem{Ivlev1961}
V.~S. Ivlev, \emph{Experimental ecology of the feeding of fishes}, (No Title)
  (1961).

\bibitem{Les1}
P.~H. Leslie, \emph{Some {F}urther {N}otes on the {U}se of {M}atrices in
  {P}opulation {M}athematics}, Biometrika \textbf{35} (1948), no.~3/4,
  213--245.

\bibitem{Les2}
\bysame, \emph{A {S}tochastic {M}odel for {S}tudying the {P}roperties of
  {C}ertain {B}iological {S}ystems by {N}umerical {M}ethods}, Biometrika
  \textbf{45} (1958), no.~1/2, 16--31.

\bibitem{Les3}
P.~H. Leslie and J.~C. Gower, \emph{The {P}roperties of a {S}tochastic {M}odel
  for two {C}ompeting {S}pecies}, Biometrika \textbf{45} (1958), no.~3/4,
  316--330.

\bibitem{Merkin1997}
D.~R Merkin, \emph{Introduction to the {T}heory of {S}tability}, vol.~24,
  Springer Science \& Business Media, 1997.

\bibitem{MeDic1986}
J.A. Metz and O.~Diekmann, \emph{The dynamics of physiologically structured
  populations}, vol.~68, Springer, 1986.

\bibitem{Murray1_2002}
J.~D. Murray, \emph{Mathematical biology {I}: {A}n introduction}, vol.~3,
  Springer, 2002.

\bibitem{Murray2_2002}
\bysame, \emph{Mathematical biology {II}: {S}patial models and biomedical
  applications}, vol.~3, Springer New York, 2002.

\bibitem{Yu2014}
S.~Yu, \emph{Global stability of a modified {L}eslie-{G}ower model with
  {B}eddington-{D}e{A}ngelis functional response}, Advances in Difference
  Equations \textbf{2014} (2014), 1--14.

\end{thebibliography}

\end{document}